\numberwithin{theorem}{section}
\pgfplotsset{compat=1.13}
\def\norm#1{\left\|#1\right\|}
\newcommand{\VECTOR}[3]{\left(#1,\, #2,\, #3\right)\transp}
\newcommand{\R}{\mathbb R}
\newcommand{\N}{\mathbb N}
\newcommand{\ybar}{y_d}
\newcommand{\efeas}{\varepsilon_{feas}}
\newcommand{\Jt}{J}
\newcommand{\pmax}{p_{\max}}
\newcommand{\transp}{^{^{\scriptstyle\intercal}}} 
\newcommand{\Ot}{\tilde{\Omega}}
\newcommand{\Chdist}{\mathrm{dist}_\infty}
\DeclareMathOperator*{\argmin}{arg\,min}
\newcommand{\BnB}{branch-and-bound\xspace}
\newcommand{\cplexmiqp}{{\tt cplexmiqp}\xspace}
\newcommand{\dgopt}{$\delta$-global optimizer\xspace}
\newcommand{\REV}[1]{\textcolor{black}{#1}}
\def\minres{{\large {\sc minres}}\xspace}
\def\gmres{{\large {\sc gmres}}\xspace}
\title{Improved penalty algorithm for Mixed Integer PDE Constrained Optimization Problems}
\author{
  Dominik Garmatter\thanks{Department of Mathematics, Chemnitz University of Technology, Reichenhainer Str.41, Chemnitz, 01926, Germany (\email{dominik.garmatter@math.tu-chemnitz.de}, \email{martin.stoll@math.tu-chemnitz.de})}
  \and
  Margherita Porcelli\thanks{Department of Mathematics, University of Bologna, Piazza di Porta San Donato 5, Bologna, 40126, Italy (\email{margherita.porcelli@unibo.it})}
  \and
  Francesco Rinaldi\thanks{Department of Mathematics "Tullio Levi-Civita", University of Padova, via Trieste 63, Padova, 35121, Italy (\email{rinaldi@math.unipd.it})}
  \and
  Martin Stoll\footnotemark[1]
}
\begin{document}

\maketitle

\begin{abstract}
Optimal control problems including partial differential equation (PDE) as well as integer constraints merge the combinatorial difficulties of integer programming and the challenges related to large-scale systems resulting from discretized PDEs. So far, the \BnB framework has been the most common solution strategy for such problems. In order to provide an alternative solution approach, especially in a large-scale context, this article investigates penalization techniques. Taking inspiration from a well-known family of existing exact penalty algorithms, a novel \textit{improved penalty algorithm} is derived, whose key ingredients are a basin hopping strategy and an interior point method, both of which are specialized for the problem class.
A thorough numerical investigation is carried out for a standard stationary test problem. Extensions to a convection-diffusion as well as a nonlinear test problem finally demonstrate the versatility of the approach.

\end{abstract}

\begin{keyword}
mixed integer optimization, optimal control, PDE-constrained optimization, exact penalty methods, interior point methods

\end{keyword}


\section{Introduction}
\label{sec:1_MINLPpaper}

Optimal control problems that are governed by a partial differential equation (PDE) as well as integer constraints on the control and possible additional constraints are commonly referred to as mixed integer PDE-constrained optimization (MIPDECO) problems. They pose several challenges as they combine two fields that have been surprisingly distinct from each other in the past: integer programming and PDEs. While integer optimization problems have an inherent combinatorial complexity that has to be dealt with, PDE-constrained optimization problems have to deal with possibly large-scale linear systems resulting from the discretization of the PDE, see, e.g., \cite{troltzsch2010optimal}.

In spite of these challenges, MIPDECO problems are gaining increased attention as they naturally arise in many real world applications such as gas networks \cite{hahn2017mixed,Schewe_Martin_2015}, the placement of tidal and wind turbines \cite{FUNKE2014658,Zhang2014,wesselhoeft2017mixed} or power networks \cite{Goettlich2019}. From the theoretical point of view, there have been  recent advances in the field including a Sum-up-Rounding strategy \cite{manns2018multi, LeyfferSUR}, a derivative-free approach \cite{larson2019method}, and new sophisticated rounding techniques \cite{LeyfferINV}.

A classical solution approach for a MIPDECO problem is to \textit{first-discretize-then-optimize}, where the PDE and the control are discretized such that the continuous MIPDECO problem is then approximated by a finite-dimensional (and possibly large-scale) mixed-integer nonlinear programming problem (MINLP). Standard techniques, see, e.g., \cite{BnB_Masterpaper} for an excellent overview, such as \BnB can then be used to solve the MINLP. Unfortunately, depending on the size of the finite\REV{-}dimensional approximation, these techniques may struggle. On the one hand, the discretization of the control might result in a large amount of integer variables and thus an immense combinatorial complexity of the MINLP. On the other hand, the discretization of the PDE results in large-scale linear systems occurring whenever an NLP-relaxation of the MINLP has to be solved.

The contribution of this article to the field is to provide an alternative approach for MIPDECO problems via an equivalent penalty formulation of the original problem. While penalty reformulations have been studied in the context of integer programming, see, e.g., \cite{giannessi1976connections,Lucidi_2010,Rinaldi_2009,zhu2003penalty}, and penalty approaches have been developed, see, e.g., \cite{Costa_2016,Lucidi_2011,Murray_2008}, there have been (to the knowledge of the authors) no contributions that explicitly deal with MIPDECO problems.

The general idea of penalty reformulations is to relax the integer constraints of the problem and add a suitable penalty term to the objective function, thus penalizing controls that violate the previously present integer constraints. A naive solution strategy \REV{is} to iteratively solve the resulting penalty formulation while increasing the amount of penalization in each iteration until one ends up with an integer solution. The upside of such penalization strategies is that the combinatorial complexity of the integer constraints is eliminated from the problem formulation and the penalty term then ensures that the resulting solution satisfies the integer constraints. The downside is that penalty terms are usually concave such that one has to deal with non-convex NLPs with  a possibly exponential amount of local minimizers.

To still provide qualitative solutions in this context, the main contribution of this article is the development of a novel algorithm that is closely related to a family of existing exact penalty (EXP) algorithms, which have been analyzed both in the context of general constrained optimization \cite{di2012approach,di2015derivative} and in the context of integer optimization \cite{Lucidi_2011}. Roughly speaking, a general EXP algorithmic framework, which is an iterative procedure, provides an automatic tool for when to increase penalization and when to aim for a better minimizer via a suitable global solver for the penalized subproblems. One can then show convergence towards a global minimizer of the original problem, see, e.g., \cite[Corollary 1]{Lucidi_2011} for the analysis of the integer case. 

A practical implementation of an EXP algorithm is carried out in this paper. Although the algorithm is developed taking into account a model problem, it will become clear that it can handle quite general MIPDECO problems. The idea of the resulting  improved penalty algorithm (IPA) is to combine the EXP framework with a suitably developed search approach, closely connected to basin hopping or iterated local search methods, see, e.g., \cite{grosso2007population,leary2000global}. The search combines a local optimization algorithm with a perturbation strategy (both tailored to the specific application) in order to find either the global or a good local minimum of the penalty reformulation. 

Our suitably developed local optimization solver is an interior point method that exploits the structure of  the penalty formulation related to a MIPDECO problem in the following ways:
\begin{itemize}
	\item it explicitly handles the non-convexity introduced by the penalty term;
	\item it uses a specific preconditioner to efficiently handle the linear algebra. 
\end{itemize} 
Via this approach, large-scale problems can be handled and the IPA is numerically compared, both for a standard test problem and a convection-diffusion problem, to a traditional penalty method as well as a \BnB routine from CPLEX \cite{CPLEX}.

The remainder of this work is organized as follows: the model problem is presented and discretized in Section \ref{sec:2_MINLPpaper}. Section \ref{sec:3_MINLPpaper} reviews the EXP algorithm, extends its convergence theory to the class of MIPDECO problems considered, and then develops the novel improved penalty algorithm. Section \ref{sec:4_MINLPpaper} gathers implementation details of the IPA, such as the interior point method, and briefly collects the remaining algorithms for the numerical comparison that is carried out in Section \ref{sec:5_MINLPpaper}. Finally, conclusions are drawn in Section \ref{sec:6_MINLPpaper} including an outlook on MIPDECO problems with a nonlinear PDE constraint.

\section{Problem formulation}
\label{sec:2_MINLPpaper}

We begin with the description of the optimal control model problem in function spaces. Following the first-discretize-then-optimize approach, we then present the discretized model problem as well as its continuous relaxation. Finally, we review existing solution techniques.

\subsection{\REV{Binary optimal control problem}}
\label{sec:2_1_MINLPpaper}

We begin with the description of the PDE in order to formulate the optimal control problem. Consider a bounded domain $\Omega\subset\R^2$ with Lipschitz boundary, source functions $\phi_1,\dots ,\phi_l\in L^2(\Omega)$ and based on these the PDE: for a given control vector $u=(u_1,\dots , u_l)\transp\in\R^l$ find the state $y\in H_0^1(\Omega)$ solving
\begin{align}
\label{eq:PDE_MINLPpaper}
-\Delta y(x) = \sum_{i=1}^l u_i \phi_i(x),\quad x\in\Omega,
\end{align}
where the PDE is to be understood in the weak sense.
Existence and uniqueness of a solution $y\in H_0^1(\Omega)$ of \eqref{eq:PDE_MINLPpaper} follow from the Lax\REV{--}Milgram theorem.
For now, we choose to model the sources $\phi_1,\dots ,\phi_l$ as Gaussian functions with centers 
\REV{$\tilde{x}_1,\dots,\tilde{x}_l$ in the interior of $\Omega$.}
Thus, for $x\in\R^2$,
\begin{align}
\label{eq:gaussian_sources_MINLPpaper}
\phi_i(x) := \kappa e^{-\frac{\norm{x-\tilde{x}_i}_2^2}{\omega}},\quad i=1,\dots, l,
\end{align}
with height $\kappa >0$ and width $\omega>0$. 
The optimal control problem in function spaces then reads: given a desired state $\ybar\in L^2(\Omega)$, find a solution pair $(y,u)\in H^1_0(\Omega)\times \{0,1\}^l$ of 
\begin{align}
\begin{array}{cl}
\label{eq:MINLP_cont_MINLPpaper}
\displaystyle
\min_{y\in H^1_0(\Omega),u\in\{0,1\}^l} & \frac{1}{2}\norm{y-\ybar}_{L^2(\Omega)}^2,\\
\mbox{s.t.} & (y,u) \mbox{ fulfill } \eqref{eq:PDE_MINLPpaper},\quad\mbox{and}\quad\sum_{i=1}^l u_i \leq S\in\N,
\end{array}
\end{align}
where the inequality constraint in \eqref{eq:MINLP_cont_MINLPpaper} is commonly referred to as a \textit{knapsack constraint}.
This problem can be interpreted as fitting a desired heating pattern $\ybar$ by activating up to $S$ many sources that are distributed around the domain $\Omega$. Since the \REV{set of feasible controls $\{0,1\}^l$} 
is finite and for each control there is a uniquely determined state $y$, problem \eqref{eq:MINLP_cont_MINLPpaper} is \REV{essentially} a combinatorial problem so that existence of at least one global minimizer is guaranteed.
We close this section with some remarks on the presented model problem.

\begin{remark}
	\label{rem:sources_MINLPpaper}
	\begin{enumerate}[(a)]
		\item The Gaussian source functions are motivated by porous-media flow applications to determine the number of boreholes, see, e.g., \cite{Fipke_2008,Ozdogan_2006}, and problem \eqref{eq:MINLP_cont_MINLPpaper} with this choice is furthermore a model problem mentioned in \cite[Section 19.3]{leyffer2016optimization}. We will see throughout the development of our algorithm that it does not rely on this particular modelling of the control. Exemplarily, Section \ref{sec:5_2_MINLPpaper} will deal with a convection-diffusion equation with piecewise constant source \REV{functions $\phi_i$} (and we mention that piecewise constant source \REV{functions} were also used in \cite{Buchheim_2018})\REV{. 
		\item Having a fixed number of $l$ source functions results in the number of integer variables being independent of the discretization mesh. While the algorithm presented in this article could in principle handle a general distributed control (as for exmaple proposed in \cite{LeyfferINV}) such that the amount of controls would then scale with the physical space discretization, we note that the overall problem would be more challenging.}
		\item It is well-known that problems with general integer constraints can be reduced to problems with binary constraints, see, e.g., \cite{Giannessi_1998}. Furthermore, \cite[Section 4]{Lucidi_2010} provides an alternative in the context of penalty approaches by directly penalizing general integer constraints. 
	\end{enumerate}
\end{remark}

\subsection{Discretized model problem and continuous relaxation}
\label{sec:2_2_MINLPpaper}

\REV{We introduce a conforming mesh over $\Omega$ with $N$ vertices such that, after choosing a suitable finite element space, $M\in\R^{N\times N}$ and $K\in\R^{N\times N}$ denote the mass and stiffness matrices and we note that $K$ and $M$ are positive definite and $M$ is symmetric. We refer to \cite{deckelnick2012note} for a discussion on the convergence of the discretized quantities.}
Furthermore, let the matrix $\Phi\in\R^{N\times l}$ contain the finite element coefficients of the source functions in its columns, i.e., each column contains the evaluation of the respective source function at the $N$ vertices of the grid. With these matrices at hand, we formulate the \textit{discretized optimal control problem}
\begin{align}
\begin{array}{cl}
\label{eq:MINLP_raw_MINLPpaper}	
\displaystyle
\min_{y\in\R^N, u\in\{0,1\}^l} & \frac{1}{2}(y-\ybar)\transp M (y-\ybar),\\
\mbox{s.t.} & \quad Ky = M\Phi u,\quad \mbox{and}\quad\sum_{i=1}^l u_i \leq S\in\N.
\end{array}
\end{align}
In \eqref{eq:MINLP_raw_MINLPpaper} and for the remainder of this article, $y$ denotes the vector of the finite element coefficients of the corresponding finite element approximation of \eqref{eq:PDE_MINLPpaper} rather than the actual PDE-solution. The same holds true for the desired state $\ybar$\REV{,} which from now on represents a finite element coefficient vector instead of an actual $L^2(\Omega)$-function. 
Relaxing the integer constraints in \eqref{eq:MINLP_raw_MINLPpaper} yields the \textit{continuous relaxation}
\begin{equation}
\begin{array}{cl}
\label{eq:MINLP_raw_contrelax_MINLPaper}
\displaystyle
\min_{y\in\R^N,~u\in\R^l} & \frac{1}{2}(y-\ybar)\transp M (y-\ybar)\\
\mbox{s.t.} & \quad Ky = M\Phi u,\quad 0\leq u\leq 1,\quad\mbox{and}\quad\sum_{i=1}^l u_i \leq S\in\N.
\end{array}
\end{equation}
We reformulate both problems \eqref{eq:MINLP_raw_MINLPpaper} and \eqref{eq:MINLP_raw_contrelax_MINLPaper} in a more compact way.

\begin{lemma}
	\label{lemma:X_and_W_MINLPpaper}
	Introducing for $x\in\R^{N+l}$
	$$
	\tilde{J}(x) := \frac{1}{2} x\transp \begin{bmatrix} M & 0 \\ 0 & 0 \end{bmatrix} x - x\transp \begin{bmatrix} M\ybar \\ 0 \end{bmatrix} + \frac{1}{2}\ybar\transp M\ybar
	$$
	and $f:\R^l\to\R^N:u\mapsto K^{-1}M\Phi u$,
	problems \eqref{eq:MINLP_raw_MINLPpaper} and \eqref{eq:MINLP_raw_contrelax_MINLPaper} are equivalent to
	\begin{equation}
	\label{eq:MINLP_MINLPpaper}
	\tag{P}
	\min_{x\in W} \tilde{J}(x) \quad W := \bigg\{x = (y,u)\transp\in\R^{N+l} \mathrel{\Big|} u\in \{0,1\}^l,~\sum_{i=1}^ l u_i \leq S,~ y = f(u)\bigg\}
	\end{equation}
	and
	\begin{equation}
	\label{eq:MINLP_contrelax_MINLPaper}
	\tag{Pcont}
	\min_{x\in X} \tilde{J}(x) \quad X := \bigg\{x = (y,u)\transp\in\R^{N+l} \mathrel{\Big|} u\in [0,1]^l,~\sum_{i=1}^ l u_i \leq S,~ y = f(u)\bigg\},
	\end{equation}
	respectively. $W\subset\R^{N+l}$ is a compact set and $X\subset\R^{N+l}$ is compact and convex such that \eqref{eq:MINLP_contrelax_MINLPaper} is a convex problem. 
\end{lemma}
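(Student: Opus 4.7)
The plan is to first establish equivalence of the two problem formulations by algebraic expansion of the quadratic cost and by eliminating the PDE constraint, and then to verify the topological properties of the feasible sets $W$ and $X$.

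For the equivalence, I would first note that the stiffness matrix $K$, arising from a conforming finite-element discretization of the Laplacian with homogeneous Dirichlet data, is symmetric positive definite and hence invertible. Thus the state equation $Ky = M\Phi u$ is equivalent to $y = K^{-1}M\Phi u = f(u)$, so a feasible pair $(y,u)\transp$ is fully parametrized by $u$ alone. Expanding
\[
 \tfrac{1}{2}(y-\ybar)\transp M(y-\ybar) = \tfrac{1}{2}y\transp M y - y\transp M\ybar + \tfrac{1}{2}\ybar\transp M\ybar
\]
and rewriting the result through the block partition $x=(y,u)\transp$ reproduces $\tilde{J}(x)$ verbatim; the all-zero lower-right block of the Hessian merely reflects that $u$ does not appear in the cost. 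Since the box/integrality and knapsack constraints are carried over unchanged, the reformulations \eqref{eq:MINLP_MINLPpaper} and \eqref{eq:MINLP_contrelax_MINLPaper} are immediate.

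Next I would handle compactness of $W$ by the trivial observation that $W$ is \emph{finite}: at most $2^l$ binary controls satisfy the knapsack constraint, and each uniquely determines $y=f(u)$. For $X$, let $U := \{u\in[0,1]^l : \sum_{i=1}^l u_i \leq S\}$; as the intersection of the compact convex box $[0,1]^l$ with a closed half-space, $U$ is compact and convex. Since $u\mapsto (f(u),u)\transp$ is a linear (hence continuous and affine) map, the image $X$ of $U$ is compact, and convex as the affine image of a convex set.

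Finally, convexity of \eqref{eq:MINLP_contrelax_MINLPaper} follows at once: $\tilde{J}$ is a convex quadratic, its Hessian $\mathrm{diag}(M,0)$ being positive semidefinite because $M$ is in fact positive definite, and this convex quadratic is minimized over the convex set $X$. The whole argument is essentially bookkeeping; the only point requiring a moment of care is verifying that the block-matrix expression reproduces the expanded quadratic objective, but this is immediate, so I do not foresee any genuine obstacle.
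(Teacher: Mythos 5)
Your argument is correct and follows essentially the same route as the paper's proof: equivalence by eliminating the state via $y=f(u)$, compactness of $W$ by finiteness, compactness and convexity of $X$ as the linear image of a compact convex set, and convexity of $\tilde{J}$ from positive semidefiniteness of $\mathrm{diag}(M,0)$. The extra detail you supply (invertibility of $K$, the explicit expansion of the quadratic) only fleshes out steps the paper leaves as "obvious."
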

\begin{proof}
	The equivalence of the problems in question follows from the definition of the sets $W$ and $X$ and the map $f$. 
	Furthermore, $W$ is obviously compact and $X$ as the image of a compact convex set under a linear map is compact and convex.
	Thus, the convexity of \eqref{eq:MINLP_contrelax_MINLPaper} follows from the convexity of $X$ and the convexity of $\tilde{J}$ where the matrix 
	$
	\begin{bmatrix} M & 0 \\ 0 & 0 \end{bmatrix},
	$
	with $M$ being positive definite, is positive semidefinite.
\end{proof}

\noindent The authors acknowledge that \eqref{eq:MINLP_MINLPpaper} might be tackled by existing methods, see, e.g., \cite{Buchheim_2018,Costa_2016,Murray_2008}, and thus want to comment on the limitations of these approaches in a large-scale context. 
\begin{enumerate}
	\item In \cite{Buchheim_2018}, a branch-and-cut algorithm is presented, where the computation of a cutting plane requires one linear PDE solution per dimension of the control space. Therefore, this approach can become excessively time-consuming for large $l$. 
	\item In \cite{Costa_2016}, an EXP framework that embeds an iterative genetic algorithm is presented, where the amount of objective function evaluations per iteration \REV{of the genetic algorithm in \cite{Costa_2016} scales} 
	quadratically with the problem dimension $l$. But in the PDE-constrained optimization context of \eqref{eq:MINLP_MINLPpaper} an evaluation of the objective function requires a PDE solution, such that the approach can become costly for large $l$ and/or $N$.
	\item In \cite{Murray_2008}, a penalty-based approach combined with a smoothing method is considered to solve nonlinear and possibly non-convex optimization problems with binary variables. The main drawback in this case is: there is no theoretical guarantee that one converges towards the global minimum. Hence, the smoothing and penalty parameters need to be carefully initialized and handled during the optimization process in order to avoid getting stuck in bad local minima.
	\item Lastly, a comparison of our method towards strategies such as a Sum-Up Rounding method for PDEs \cite{LeyfferSUR} and a sophisticated rounding technique \cite{LeyfferINV} are topics for future work.
\end{enumerate}
Although not strictly considering mixed-integer problems, we mention that the multi-bang approach described in \cite{clason2014multi} may also be considered to solve MIPDECO problems. 
In view of \eqref{eq:MINLP_MINLPpaper}, it is not clear how the approach from \cite{clason2014multi} translates from distributed to modeled controls and how the addition of a knapsack constraint can be dealt with.

\section{Improved penalty algorithm (IPA)}
\label{sec:3_MINLPpaper}

This section contains the main contribution of this article, the development of our novel improved penalty algorithm (IPA). We will first introduce a well-known equivalent penalty reformulation of \eqref{eq:MINLP_MINLPpaper}, followed by an exact penalty algorithm from \cite{Lucidi_2011}. Afterwards we will develop the IPA, where the idea is to combine the EXP framework with a local search strategy such that the resulting algorithm only relies on a local solver.

\subsection{Penalty formulation and exact penalty (EXP) algorithm}
\label{sec:3_1_MINLPpaper}

Starting from the continuous relaxation \eqref{eq:MINLP_raw_contrelax_MINLPaper}, we add the well-known penalty term 
\begin{equation}
\label{eq:penaltyterm_MINLPpaper}
\frac{1}{\varepsilon} \sum_{i=1}^l u_i(1-u_i)
\end{equation}
to the objective function. Obviously, this concave penalty term penalizes a non-binary control, where $\varepsilon > 0$ controls the amount of penalization. This yields the following \textit{penalty formulation}
\begin{equation}
\begin{array}{cl}
\label{eq:MINLP_raw_penalty_MINLPpaper}
\displaystyle
\min_{y\in\R^N,~u\in\R^l} & \frac{1}{2}(y-\ybar)\transp M (y-\ybar) + \frac{1}{\varepsilon} \sum_{i=1}^l u_i(1-u_i)\\
\mbox{s.t.} & Ky = M\Phi u,\quad 0\leq u\leq 1\quad\mbox{and}\quad\sum_{i=1}^l u_i \leq S\in\N.
\end{array}
\end{equation}
Following Lemma \ref{lemma:X_and_W_MINLPpaper}, \eqref{eq:MINLP_raw_penalty_MINLPpaper} can be rewritten as
\begin{equation}
\label{eq:MINLP_penalty}
\tag{Ppen}
\begin{split}
&\min_{x\in X} \Jt(x;\varepsilon),\quad\mbox{with}\\
&\Jt(x;\varepsilon) := \frac{1}{2} x\transp \begin{bmatrix} M & 0 \\ 0 & -\frac{2}{\varepsilon}I_{l} \end{bmatrix} x - x\transp \begin{bmatrix} M\ybar \\ -\frac{1}{\varepsilon}{\bf 1} \end{bmatrix} + \frac{1}{2}\ybar\transp M\ybar, 
\end{split}
\end{equation}
where $I_{l}\in\R^{l\times l}$ is the identity-matrix and ${\bf 1} := (1, \dots, 1)\transp \in \R^l$.
\begin{proposition}
	\label{prop:Equiv_MINLPpaper}
	There exists an $\tilde{\varepsilon} > 0$ such that for all $\varepsilon \in (0,\tilde{\varepsilon}]$ problems \eqref{eq:MINLP_MINLPpaper} and \eqref{eq:MINLP_penalty} have the same minimizers. Having the same minimizers here means that both problems \eqref{eq:MINLP_MINLPpaper} and \eqref{eq:MINLP_penalty} have the same global minima (if there exist multiple). In this sense both problems \eqref{eq:MINLP_MINLPpaper} and \eqref{eq:MINLP_penalty} are equivalent.
\end{proposition}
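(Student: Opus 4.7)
The plan is to exploit the affine structure of $X$ and rewrite \eqref{eq:MINLP_penalty} as a \emph{concave minimization over a polytope} in the control variable, and then to use integrality of the vertices of that polytope. Parametrizing $X$ by $u$ via $x(u) := (f(u), u)\transp$, I have $x(u) \in X$ iff $u \in U := \{u \in [0,1]^l : \sum_i u_i \le S\}$, and the penalized objective restricted to $X$ reads
\[
F_\varepsilon(u) := \Jt(x(u); \varepsilon) = g(u) + \frac{1}{\varepsilon} \sum_{i=1}^l u_i(1-u_i),
\]
where $g(u) := \tfrac{1}{2}(f(u) - \ybar)\transp M (f(u) - \ybar)$ is a convex quadratic in $u$ with Hessian $H := (K^{-1}M\Phi)\transp M (K^{-1}M\Phi) \succeq 0$. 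In these variables, \eqref{eq:MINLP_MINLPpaper} amounts to minimizing $g$ over $U \cap \{0,1\}^l$ and \eqref{eq:MINLP_penalty} to minimizing $F_\varepsilon$ over $U$; via the bijection $u \mapsto x(u)$, global minima in $u$-space correspond one-to-one to global minima in $x$-space.

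The key observation is that $\nabla^2 F_\varepsilon = H - (2/\varepsilon)\,I_l$ becomes strictly negative definite as soon as $\varepsilon < 2/\lambda_{\max}(H)$. Setting $\tilde\varepsilon := 2/(\lambda_{\max}(H)+1) > 0$, for every $\varepsilon \in (0, \tilde\varepsilon]$ the function $F_\varepsilon$ is strictly concave on $\R^l$. A standard extreme-point argument (if a minimizer $u^\star$ were a non-trivial convex combination $\sum_i \lambda_i v_i$ of vertices of $U$, strict concavity would give $F_\varepsilon(u^\star) > \sum_i \lambda_i F_\varepsilon(v_i) \ge \min_i F_\varepsilon(v_i)$, contradicting the minimality of $u^\star$) then shows that every global minimizer of $F_\varepsilon$ on $U$ must be a vertex of $U$.

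To finish, I would verify vertex-integrality of $U$: at any vertex, $l$ linearly independent constraints among $u_i \ge 0$, $u_i \le 1$, $\sum_j u_j \le S$ are active. If the knapsack constraint is inactive, all $l$ active constraints are box constraints and the vertex lies in $\{0,1\}^l$; if it is active, $l-1$ box constraints fix $l-1$ coordinates to values in $\{0,1\}$, and the remaining coordinate $u_i$ equals $S - \sum_{j \ne i} u_j$, a nonnegative integer which by feasibility also lies in $[0,1]$, hence in $\{0,1\}$. Consequently every global minimizer of $F_\varepsilon$ over $U$ lies in $U \cap \{0,1\}^l$, where the penalty term vanishes and $F_\varepsilon = g$; the global minimizers of \eqref{eq:MINLP_penalty} therefore coincide with those of \eqref{eq:MINLP_MINLPpaper}. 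The main (mild) obstacle I expect is writing the vertex-integrality case analysis cleanly and pinpointing the correct concavity threshold in terms of $\lambda_{\max}(H)$; the extreme-point argument and the vanishing of the penalty on $\{0,1\}^l$ are routine.
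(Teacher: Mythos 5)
Your argument is correct, but it follows a genuinely different route from the paper. The paper's proof is a two-line citation: it checks that $\Jt$ is $C^1$ and that $W$ and $X$ are compact (Lemma \ref{lemma:X_and_W_MINLPpaper}) and then invokes the general exact-penalty result \cite[Theorem 2.1]{Lucidi_2010}, which asserts the existence of some $\tilde\varepsilon>0$ for a whole family of concave penalty terms without exhibiting it. You instead give a self-contained argument tailored to the quadratic structure: reducing to the control polytope $U=\{u\in[0,1]^l:\sum_i u_i\le S\}$, observing that the reduced objective has Hessian $H-(2/\varepsilon)I_l$ and becomes strictly concave once $\varepsilon<2/\lambda_{\max}(H)$, so every global minimizer is a vertex of $U$, and then verifying that the vertices of this knapsack-with-box polytope are integral (which is where the hypothesis $S\in\N$ enters). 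Your chain of identifications is sound, including the final step: since every minimizer of \eqref{eq:MINLP_penalty} lands in $U\cap\{0,1\}^l$ where the penalty vanishes, the two optimal values agree and the two sets of global minima coincide in both directions (the converse inclusion is implicit in your last sentence but follows immediately). What each approach buys: the paper's citation is shorter and, as the authors note after the proposition, covers a variety of concave penalties beyond \eqref{eq:penaltyterm_MINLPpaper}; yours is specific to the quadratic penalty but yields an explicit, computable threshold $\tilde\varepsilon=2/(\lambda_{\max}(H)+1)$ with $H=(K^{-1}M\Phi)\transp M(K^{-1}M\Phi)$ and makes transparent \emph{why} exactness holds, namely strict concavity forcing vertex solutions of an integral polytope.
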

\begin{proof}
	From Lemma \ref{lemma:X_and_W_MINLPpaper} it is clear that $J \in C^1(\R^{N+l})$ and that $W$ and $X$ are compact. Together with the results derived in \cite[Section 3]{Lucidi_2010} all assumptions of \cite[Theorem 2.1]{Lucidi_2010} are fulfilled such that the desired statement follows.
\end{proof}
\noindent We mention that the equivalence result from Proposition \ref{prop:Equiv_MINLPpaper} also holds for a variety of concave penalty terms, see, e.g., \cite[\REV{Equations} (19)-(23)]{Lucidi_2010} or \cite[\REV{Equation} (21)]{Rinaldi_2009}. We chose the penalty term \eqref{eq:penaltyterm_MINLPpaper} in this article since it is quadratic and thus the combined objective function $\Jt$ remains quadratic.

Before we formulate the exact penalty algorithm, we introduce a rounding strategy that suitably handles the knapsack constraint in $X$ and $W$ and prove that it is the correct tool required for the algorithm design.
\begin{definition}
	\label{def:SR_MINLPpaper}
	For $x = [y,u]\transp\in X$ and $S\in\N$, with $S\leq l$, let $u_S\in\R^S$ denote the $S$ largest components of $u$. The \textit{smart rounding} of $x$ is given as follows:
	$$[x]_{SR} := \left(f([u]_{SR}), [u]_{SR}\right)\transp \in W,$$
	with $f$ defined as in Lemma~\ref{lemma:X_and_W_MINLPpaper} and $\left[u \right]_{SR}$ obtained by rounding $u_S$ component-wise to the closest integer, while setting the remaining components to $0$.
\end{definition}

\noindent We illustrate the smart rounding by considering a simple example working only with control values\REV{: it will demonstrate that the smart rounding does, by definition, satisfy the knapsack constraint, while the usual rounding to the closest integer may fail to do so.}

\begin{example}
	Let $S=2$ and $l=3$ and let $[\cdot]$ denote the usual rounding to the closest integer. Then, for 
	\[u = \VECTOR{0.8}{0.7}{0.1}\quad\mbox{and}\quad v = \VECTOR{0.63}{0.61}{0.62}\]
	it is $u_S = \left(0.8,\, 0.7\right)\transp$ such that $[u]_{SR} = \VECTOR{1}{1}{0} \REV{= [u]}$, whereas $v_S = \left(0.63,\, 0.62\right)\transp$ such that $[v]_{SR} = \VECTOR{1}{0}{1} \neq \REV{[v] = \VECTOR{1}{1}{1}}$. 
\end{example}

\noindent With \REV{Definition \ref{def:SR_MINLPpaper}} at hand, we state in Algorithm \ref{algo:EXP_Algorithm_MINLPpaper} the adaptation of the EXP algorithm from \cite[Section \REV{3}]{Lucidi_2011}  to our model problem \eqref{eq:MINLP_penalty}. 
\REV{We note that Algorithm \ref{algo:EXP_Algorithm_MINLPpaper} is obtained from the original EXP algorithm following the ideas of \cite[Section 4]{Lucidi_2011}, where an adaptation for bound-constrained mixed integer problems was defined.}

\begin{algorithm}[H]
	\caption{EXP($\varepsilon^0 >0$, $\delta^0 >0$, $\sigma\in (0,1)$)}
	\begin{algorithmic}[1]
		\label{algo:EXP_Algorithm_MINLPpaper}
		\STATE{$n = 0$, $\varepsilon^n = \varepsilon^0$, $\delta^n = \delta^0$}
		\STATE{\textbf{Step 1.} Compute $x^n \in X$ such that $\Jt(x^n;\varepsilon^n) \leq \Jt(x;\varepsilon^n) + \delta^n$ for all $x\in X$.}
		\STATE{\textbf{Step 2.}}
		\IF{$x^n\notin W$ \textbf{and} $\Jt(x^n;\varepsilon^n) - \Jt([x^n]_{SR};\varepsilon^n) \leq \varepsilon^n \norm{x^n - [x^n]_{SR}}_2$}
		\STATE{$\varepsilon^{n+1} = \sigma\varepsilon^n$, $\delta^{n+1} = \delta^n$}
		\ELSE
		\STATE{$\varepsilon^{n+1} = \varepsilon^n$, $\delta^{n+1} = \sigma\delta^n$}
		\ENDIF
		\STATE{\textbf{Step 3.} Set $n = n+1$ and go to Step 1.}
	\end{algorithmic}
\end{algorithm}

\noindent Algorithm \ref{algo:EXP_Algorithm_MINLPpaper} assumes that in Step 1 a so-called \textit{\dgopt}, i.e., an iterate fulfilling the condition in Step 1, can be found, for example via a global optimization method, see, e.g., \cite{locatelli2013global} for an overview of existing methods. Step 2 of the algorithm then provides a tool to decide when to increase penalization and when to seek for a better global minimizer.
\REV{Before we discuss the main convergence property of the algorithm,} we want to comment on the second condition in line $4$ of Algorithm \ref{algo:EXP_Algorithm_MINLPpaper}: this condition is based on \cite[\REV{Equation} $(3)$]{Lucidi_2011}, a H\"older-condition for the unpenalized objective function. Since our objective function $J$ is quadratic, it is H\"older-continuous with H\"older-exponent equal to $1$. Furthermore, the H\"older-constant that appears in the original formulation of the algorithm in \cite[Section \REV{3}]{Lucidi_2011}, can for simplicity be set to $1$ since it only influences the convergence speed of the algorithm. Thus, it does not appear in our formulation.
\REV{We notice that the updating rule in line $4$ of Algorithm \ref{algo:EXP_Algorithm_MINLPpaper} is tailored for problem \eqref{eq:MINLP_MINLPpaper}: the key step in here is choosing a suitable feasible point $z^n=[x^n]_{SR}$ with respect to which we can build up a neighborhood at minimum distance from $x^n$. This check is crucial to guarantee convergence of the given algorithmic scheme (see \cite[Lemma 1]{Lucidi_2011}). }

\REV{Before we prove a fundamental result in the upcoming Proposition \ref{prop:SR_MINLPpaper} that is the nedeed adaptation of \cite[Proposition 2]{Lucidi_2011}
, we give a useful definition. Once Proposition \ref{prop:SR_MINLPpaper} is proven, one can easily obtain \cite[Lemma 1]{Lucidi_2011} such that the convergence of Algorithm \ref{algo:EXP_Algorithm_MINLPpaper} follows from \REV{\cite[Theorem 2]{Lucidi_2011}} and \cite[Corollary $1$]{Lucidi_2011}.}

\begin{definition}
	The \textit{Chebyshev distance} between a point $x\in\R^{N+l}$ and a \REV{closed} set $C\subset\R^{N+l}$ is defined as 
	\begin{equation*}
	\Chdist(x,C)=\min_{y\in C}\norm{x-y}_\infty.
	\end{equation*}
\end{definition}

\begin{proposition}
	\label{prop:SR_MINLPpaper}
	Let $f$, $W$ and $X$ be the linear map and the sets defined in Lemma \ref{lemma:X_and_W_MINLPpaper}.
	For $z=(f(z_u),z_u)\transp\in W$\REV{, where $z_u$ denotes the control part of $z$}, let $B(z)$ be the set
	\begin{equation}\label{eq:-1}
	B(z) := \{x=(y,u)\transp\in\R^{N+l} \mid \norm{y}_\infty \leq \beta, \norm{u-z_u}_\infty \leq \rho\}.
	\end{equation}
	\REV{Letting $\rho < 0.5$ and choosing $\beta \geq \max_{z\in X} \norm{f(z_u)}_\infty$, it follows that  $z\in B(z)$ for all $z\in W$ and 
	$$
	B(z_a)\cap B(z_b)=\emptyset, \mbox{ for all } z_a,z_b \in W \mbox{ with } z_a\neq z_b.
	$$
	Furthermore,} given a point $\bar x = (f(\bar u),\bar u)\transp\in X$, then the point $\bar z := [\bar x]_{SR}\in W$ minimizes the Chebyshev distance between $\bar x$ and the sets $B(z)$ with $z\in W$, that is
	$$
	\bar z \in \argmin_{z\in W} \Chdist(\bar x,B(z)).
	$$
\end{proposition}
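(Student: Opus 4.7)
The plan is to first reduce the Chebyshev distance from $\bar x$ to $B(z)$ to a quantity depending only on the control coordinates. The feasible set of $\bar u$ is the knapsack polytope $\{u \in [0,1]^l : \sum_i u_i \leq S\}$, which is integral, so $\bar u$ is a convex combination of feasible binary vectors and, by linearity of $f$, $\bar y = f(\bar u)$ is a convex combination of $\{f(z_u) : z \in W\}$. The hypothesis $z \in B(z)$ for every $z \in W$ forces $\|f(z_u)\|_\infty \leq \beta$ for each such $z$, hence $\|\bar y\|_\infty \leq \beta$ as well. Consequently $y = \bar y$ is admissible in any $B(z)$, and the optimal $u$-component is the $\infty$-norm projection of $\bar u$ onto $\{u : \|u - z_u\|_\infty \leq \rho\}$. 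Working component-wise this gives
\begin{equation*}
\Chdist(\bar x, B(z)) = \max\bigl(0,\, \|\bar u - z_u\|_\infty - \rho\bigr),
\end{equation*}
and since $t \mapsto \max(0, t - \rho)$ is non-decreasing, the claim reduces to proving that $v^* := [\bar x]_{SR}$ (viewed via its $u$-coordinate) minimizes $\|\bar u - v\|_\infty$ over binary $v$ with $\sum_i v_i \leq S$.

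Second, I would apply a standard exchange argument. Sort the entries of $\bar u$ in non-increasing order (breaking ties arbitrarily) and let $J_v := \{i : v_i = 1\}$. If there exist $i \in J_v,\, j \notin J_v$ with $\bar u_j > \bar u_i$, swapping $v_i$ and $v_j$ replaces the contributions $\{1 - \bar u_i,\, \bar u_j\}$ to the max-norm by $\{\bar u_i,\, 1 - \bar u_j\}$; the latter pair is dominated componentwise by the former, so $\|\bar u - v\|_\infty$ cannot increase while feasibility is preserved. We may therefore assume $J_v = \{1, \ldots, k\}$ in the sorted order for some $k \leq S$, so that the problem reduces to minimizing
\begin{equation*}
g(k) := \max\bigl(1 - \bar u_k,\; \bar u_{k+1}\bigr), \qquad k \in \{0, 1, \ldots, S\},
\end{equation*}
with the conventions $\bar u_0 := 1$ and $\bar u_{l+1} := 0$. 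By construction $[\bar x]_{SR}$ corresponds to $k^* = \min(|I|, S)$, where $I := \{i : \bar u_i \geq 1/2\}$.

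To finish, a short case analysis shows $k^*$ is optimal: for $k < |I|$ one has $\bar u_{k+1} \geq 1/2 \geq 1 - \bar u_k$, so $g(k) = \bar u_{k+1}$ is non-increasing in $k$; at $k = |I|$ both arguments of $g$ lie in $[0,1/2]$; and for $k > |I|$ the term $1 - \bar u_k$ exceeds $1/2$, forcing $g(k) > g(|I|)$. Hence $g$ is minimized at $|I|$ on $\{0, \ldots, l\}$ and thus at $k^* = \min(|I|, S)$ on the restricted range $\{0, \ldots, S\}$, which completes the argument. The main obstacle, in my view, is the initial reduction: recognising that integrality of the knapsack polytope together with the defining condition on $\beta$ yields $\|\bar y\|_\infty \leq \beta$, thereby collapsing a distance problem in $\R^{N+l}$ to a one-dimensional combinatorial question. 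Once this reduction is in hand, the exchange argument and the case analysis are routine.
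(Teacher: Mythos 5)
Your proof is correct, and it shares the same overall skeleton as the paper's: both reduce the proposition to the claim that $[\bar u]_{SR}$ minimizes $\norm{\bar u - z_u}_\infty$ over $z\in W$, and then settle that claim combinatorially. The execution differs in both halves, though. For the reduction, the paper argues by contradiction, introducing nearest points $\hat p\in B(\hat z)$ and $\bar p\in B(\bar z)$ and deriving additivity relations for the $\infty$-distances; you instead derive the closed form $\Chdist(\bar x,B(z))=\max(0,\norm{\bar u-z_u}_\infty-\rho)$ and invoke monotonicity of $t\mapsto\max(0,t-\rho)$, which also absorbs the trivial case $\bar x\in B(z)$ uniformly. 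Notably, you justify $\norm{f(\bar u)}_\infty\le\beta$ via integrality of the knapsack polytope and linearity of $f$ --- a point the paper dismisses as ``easy to see'' when asserting $\hat p_y=\bar p_y=f(\bar u)$ --- so your version is more complete on that step. For the combinatorial core, the paper enumerates three ``operations'' transforming $\bar z$ into an arbitrary $\tilde z\in W$ and checks that none decreases the distance; you sort $\bar u$, use an exchange argument to restrict to prefix supports, and minimize the one-dimensional function $g(k)=\max(1-\bar u_k,\,\bar u_{k+1})$ over $k\in\{0,\dots,S\}$. The two are logically equivalent, but your $g(k)$ formulation makes the roles of the threshold $1/2$ and of the cap at $S$ completely transparent, at the cost of a small bookkeeping convention for ties and for components exactly equal to $1/2$ (on which the paper's definition of the smart rounding is equally silent).
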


\begin{proof} 
\REV{With the choices of $\rho$ and $\beta$ the first statements are trivial. We furthermore mention that with $f$ being the linear map from Lemma \ref{lemma:X_and_W_MINLPpaper} and the $\norm{\cdot}_\infty$ inducing a matrix norm, we have
$$
\max_{z\in X} \norm{f(z_u)}_\infty = \max_{z\in X} \norm{K^{-1}M\Phi z_u}_\infty \leq \norm{K^{-1}M\Phi}_\infty \underbrace{\max_{z\in X} \norm{z_u}_\infty}_{=1}
$$
such that a finite $\beta$ can be chosen.
}
	
	Now, if there exists a $z\in W$ such that $\bar x \in B(z)$, it has to be $z \REV{=} \bar z = [\bar x]_{SR}$. In this trivial case, we have $\Chdist(\bar x, B(\bar z)) = 0$ and the result follows. 
	
	\REV{For the case that $\bar{x} \notin B(z)$ for any $z \in W$, we use a contradictory argument.}
	Therefore, we assume in the following that $\bar x\notin B(z)$ for all $z\in W$ \REV{and that there}  exists a point $\hat z = (f(\hat{z}_u),\hat{z}_u)\transp\in W$ satisfying
	\begin{equation}\label{eq:0}
	\Chdist(\bar x, B(\hat z)) < \Chdist(\bar x, B(\bar z)).    
	\end{equation}
	We can hence find two points $\hat p=(\hat p_y, \hat p_u)\transp \in B(\hat z)$ and $\bar p=(\bar p_y,\bar p_u)\transp\in B(\bar z)$ satisfying
	\begin{equation}\label{eq:1}
	\norm{\hat{p} - \bar{x}}_\infty = \Chdist(\bar{x},B(\hat{z}))\quad\mbox{and}\quad
	\norm{\bar{p} - \bar{x}}_\infty = \Chdist(\bar{x},B(\bar{z})).
	\end{equation}
	\REV{Equation \eqref{eq:1} together with the definition of the Chebyshev distance, the definition in \eqref{eq:-1}, as well as the choice of $\beta$ imply that $\hat p_y=\bar p_y=f(\bar u)$. Equation \eqref{eq:1} and the definition in \eqref{eq:-1} then furthermore yield }
	\begin{equation}\label{eq:2}
	\norm{\hat p_u-\hat z_u}_\infty=\norm{\bar p_u-\bar z_u}_\infty=\rho.
	\end{equation}
	\REV{We thus obtain}
	\begin{equation}
	\norm{\hat{p} - \bar{x}}_\infty = \max\{\norm{\hat{p}_u - \bar{u}}_\infty, \underbrace{\norm{\hat{p}_y - f(\bar{u})}_\infty}_{=0}\} = \norm{\hat{p}_u - \bar{u}}_\infty
	\end{equation}
	and equivalently $\norm{\bar{p} - \bar{x}}_\infty = \norm{\bar{p}_u - \bar{u}}_\infty$\REV{. As a consequence, the set defined in equation \eqref{eq:-1} together with the definition of the $\norm{\cdot}_\infty$--norm yield} 
	\begin{equation}\label{eq:3}
	\norm{\bar u-\hat
		z_u}_\infty=\norm{\bar u-\hat p_u}_\infty+\underbrace{\norm{\hat p_u-\hat z_u}_\infty}_{=\rho}
	\end{equation}
	\REV{as well as}
	\begin{equation}\label{eq:31}
	\norm{\bar u-\bar z_u}_\infty=\norm{\bar u-\bar p_u}_\infty+\underbrace{\norm{\bar p_u-\bar z_u}_\infty}_{=\rho},
	\end{equation}
	\REV{where the equalities stem from the fact that $\norm{u-z_u}_\infty \leq \rho$ inside \eqref{eq:-1} defines a unit cube.} 
	On the other hand, we obtain from \eqref{eq:0}  and \eqref{eq:1}
	that $\norm{\bar u-\bar p_u}_\infty > \norm{\bar u-\hat p_u}_\infty$ such that we conclude from equations \eqref{eq:0}-\eqref{eq:31} that
	\begin{equation}\label{ineq:fin}
	\norm{\bar u-\bar z_u}_\infty-\norm{\bar u-\hat z_u}_\infty=\norm{\bar u-\bar p_u}_\infty+\rho-\norm{\bar u-\hat p_u}_\infty-\rho>0.
	\end{equation}
	Remembering that $\bar z=[\bar x]_{SR}$, such that $\bar{z}_u \REV{=} \left[\bar u \right]_{SR}$, and that $\bar z\neq \hat z \, \Rightarrow \, \bar{z}_u \neq \hat{z}_u$ (follows from the definition of $W$), we know that for at least one component $i\in I=\{1,\dots, l\}$ it holds $\bar{z}_{u,i}\neq \hat{z}_{u,i}$. Let us now define the set 
	$$
	I_L := \{i\in I \mid \bar u_i\geq 0.5\}.
	$$
	If $|I_L|<S$, we have $\bar{z}_u = [\bar u]_{SR} = [\bar u]$, where $[\cdot]$ denotes the usual rounding \REV{to the nearest integer}, \REV{such that there exists a component $i\in I$ satisfying
$$
\|\bar{u} - \bar{z}_u\|_\infty = |\bar{u}_i - \bar{z}_{u,i}| \le 0.5 \le |\bar{u}_i - \hat{z}_{u,i}| \le
\|\bar{u} - \hat{z}_u\|_\infty
$$
}
	thus contradicting  \eqref{ineq:fin}.
	
	Therefore, we assume that $|I_L|\geq S$ in the following and define the set $I_S$, with $|I_S| = S$, so that $\bar u_i > \bar u_j$ for all $i\in I_S$ and \REV{all} $j\in I_L\setminus I_S$, i.e., the index set of the $S$ largest components of $\bar u$. By the definition of the smart rounding, it is then obvious that $\bar z_{u,i} = 1$ for $i\in I_S$ and $\bar z_{u,i} = 0$ for $i\in I\setminus I_S$.
	
	Now, any $\tilde z\in W$ can be obtained from $\bar z$ by considering any combination of the following operations:
	\begin{enumerate}
		\item $\bar z_{u,i}=1 \rightarrow \tilde z_{u,i}=0$ for one $i\in I_S$;
		\item $\bar z_{u,i}=1 \rightarrow \tilde z_{u,i}=0$ for one $i\in I_S$ and $\tilde z_{u,j}=1$ for one $j\in I\setminus I_L$;
		\item $\bar z_{u,i}=1 \rightarrow \tilde z_{u,i}=0$ for one $i\in I_S$ and $\tilde z_{u,j}=1$ for one $j\in I_L\setminus I_S$.
	\end{enumerate}
	Since $\bar u_i\geq 0.5$ for all $i\in I_S$, the first part of any of these operations results in
	$$
	|\bar u_i-\bar z_{u,i}|\leq  |\bar u_i-\tilde z_{u,i}|.
	$$
	In the second operation $j\in I\setminus I_L$ implies that $\bar u_j < 0.5$ and $\bar z_{u,j} = 0$ and we obtain
	$$
	|\bar u_j-\bar z_{u,j}|\leq  |\bar u_j-\tilde z_{u,j}|.
	$$
	In the third operation $j\in I_L\setminus I_S$ implies that $\bar u_j \geq 0.5$ but $\bar z_{u,j} = 0$ such that
	$$
	|\bar u_j-\bar z_{u,j}| \geq  |\bar u_j-\tilde z_{u,j}|.
	$$
	Taking the whole third operation into account and remembering that $i\in I_S$ as well as the definition of the smart rounding, we can see that
	$$
	\max\{|\bar u_j-\bar z_{u,j}|, |\bar u_i-\bar z_{u,i}|\} \leq \max\{|\bar u_j-\tilde z_{u,j}|, |\bar u_i-\tilde z_{u,i}|\}.
	$$
	Forming any $\tilde z\in W$ from $\bar z$ via these operations thus implies that
	$$
	\|\bar u-\bar z_u\|_\infty\leq \|\bar u-\tilde z_u\|_\infty
	$$
	and as especially $\hat z_u\in W$ can be obtained from $\bar z_u$, we have $\|\bar u-\bar z_u\|_\infty\leq \|\bar u-\hat z_u\|_\infty$ which is a contradiction to \eqref{ineq:fin}. Hence, we get that 
	$$
	\Chdist(\bar x, B(\hat z))\geq \Chdist(\bar x, B(\bar z)),\mbox{ for all }\hat z\in W,
	$$
	which concludes the proof.
\end{proof}

\noindent\REV{We end this section with the} main convergence property of Algorithm \ref{algo:EXP_Algorithm_MINLPpaper} \REV{that} is reported in the upcoming Proposition
\ref{prop:exp}\REV{. It} shows that Algorithm \ref{algo:EXP_Algorithm_MINLPpaper} extends global optimization methods for continuous problems to integer problems.

\begin{proposition}
	\label{prop:exp}
	Every accumulation point $x^*$ of a sequence of iterates $\{x^n\}_{n\in\N}$ of Algorithm \ref{algo:EXP_Algorithm_MINLPpaper} is a global minimizer of \eqref{eq:MINLP_MINLPpaper}.
\end{proposition}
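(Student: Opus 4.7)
The plan is to adapt the convergence analysis of \cite[Corollary~1]{Lucidi_2011} to the present discretized MIPDECO setting, relying on three ingredients already available: compactness of $X$ and continuity of $\Jt(\cdot;\varepsilon)$ (Lemma~\ref{lemma:X_and_W_MINLPpaper}), the equivalence of \eqref{eq:MINLP_MINLPpaper} and \eqref{eq:MINLP_penalty} for $\varepsilon\in(0,\tilde\varepsilon]$ (Proposition~\ref{prop:Equiv_MINLPpaper}), and the Chebyshev-optimality of the smart rounding $[x]_{SR}\in W$ (Proposition~\ref{prop:SR_MINLPpaper}). Denote by $x^*_{\min}\in W$ a global minimizer of \eqref{eq:MINLP_MINLPpaper}, which exists by compactness of $W$, and let $x^n\to x^*$ along a convergent subsequence.

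The master inequality driving the proof comes from Step~1 applied to the feasible test point $x^*_{\min}\in W\subset X$:
\[
\Jt(x^n;\varepsilon^n) \le \Jt(x^*_{\min};\varepsilon^n) + \delta^n = \tilde{J}(x^*_{\min}) + \delta^n,
\]
where the equality uses that the penalty term vanishes on the binary-feasible set $W$. Since the penalty is non-negative on $X$, this simultaneously bounds the quadratic part $\tilde{J}(x^n)$ and the penalty $\frac{1}{\varepsilon^n}\sum_i u^n_i(1-u^n_i)$ uniformly in $n$.

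From here I would distinguish whether the \texttt{if}-branch of Step~2 is entered infinitely often. In \emph{Case A} it is, so $\varepsilon^n\to 0$ along the update subsequence; the uniform bound on the penalty then forces $\sum_i u^n_i(1-u^n_i)\le C\,\varepsilon^n\to 0$, so $u^*_i\in\{0,1\}$ and, using continuity of $f$ together with closedness of the knapsack constraint, $x^*\in W$. Pinning down $\tilde{J}(x^*)=\tilde{J}(x^*_{\min})$ then proceeds by passing to the limit in the master inequality and in the \texttt{if}-condition $\Jt(x^n;\varepsilon^n)\le\tilde{J}([x^n]_{SR})+\varepsilon^n\norm{x^n-[x^n]_{SR}}_2$, using $\tilde{J}([x^n]_{SR})\ge\tilde{J}(x^*_{\min})$ to sandwich. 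In \emph{Case B} the \texttt{if}-branch triggers only finitely often, so from some index on $\varepsilon^n\equiv\bar\varepsilon$ and $\delta^n\downarrow 0$; continuity of $\Jt(\cdot;\bar\varepsilon)$ and Step~1 identify $x^*$ as an exact global minimizer of $\Jt(\cdot;\bar\varepsilon)$ on $X$. Persistent failure of the \texttt{if}-condition means that either $x^n\in W$ or $\Jt(x^n;\bar\varepsilon)-\Jt([x^n]_{SR};\bar\varepsilon)>\bar\varepsilon\,\norm{x^n-[x^n]_{SR}}_2$, and combined with the $\delta^n$-optimality of $x^n$ at $[x^n]_{SR}\in W\subset X$ the second branch forces $\bar\varepsilon\,\norm{x^n-[x^n]_{SR}}_2<\delta^n\to 0$, hence $x^*\in W$. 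Since $\Jt(\cdot;\bar\varepsilon)\equiv\tilde{J}$ on $W$, $x^*$ globally minimizes $\tilde{J}$ over $W$.

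The step I expect to be the main obstacle is the bookkeeping of remainder terms in Case~A so that the sandwich actually collapses to equality: one needs that whatever does not vanish among $\delta^n$ is absorbed by the $o(1)$ contributions from $\varepsilon^n\to 0$ and from $\norm{x^n-[x^n]_{SR}}_2\to 0$, the latter following from $\sum_i u^n_i(1-u^n_i)\to 0$ via an elementary componentwise argument. Proposition~\ref{prop:SR_MINLPpaper} is the true bridge connecting algorithmic stopping behaviour to integer feasibility of the limit: it guarantees that $[x^n]_{SR}$ realises the \emph{smallest} rounding distance to $W$, which is what makes the failure of the Step~2 test quantitatively informative in Case~B (yielding the estimate $\norm{x^n-[x^n]_{SR}}_2\le\delta^n/\bar\varepsilon$) and what legitimises the squeeze in Case~A.
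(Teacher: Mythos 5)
Your Case~B is sound --- and it is in fact the only case that actually occurs --- but Case~A is where the genuine gap sits, and it is not a matter of ``bookkeeping of remainder terms''. In the sub-case where the \texttt{if}-branch fires at all but finitely many iterations, $\delta^n$ stagnates at some $\bar\delta>0$, so your master inequality only yields $\tilde{J}(x^*)\le \tilde{J}(x^*_{\min})+\bar\delta$, while the limit of the Step-2 test degenerates to the tautology $\tilde{J}(x^*)\le\tilde{J}(x^*)$ (since $[x^n]_{SR}\to x^*$ when $x^*\in W$); no sandwich closes. The missing idea --- the heart of \cite[Corollary 1]{Lucidi_2011}, which the paper simply invokes after using Proposition~\ref{prop:SR_MINLPpaper} to certify that the smart rounding plays the role required by that framework --- is a preliminary lemma showing that the penalty parameter is decreased only \emph{finitely} many times, so that Case~A is vacuous and only your Case~B remains. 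Concretely: writing $p(u):=\sum_i u_i(1-u_i)$, your own bound $p(u^n)\le C\varepsilon^n$ forces $u^n$ towards the binary set whenever $\varepsilon^n\to 0$; for near-binary, knapsack-feasible $u$ one has $[u]_{SR}=[u]$ and the elementary estimate $\norm{u-[u]_{SR}}_\infty\le 2\,p(u)$ (the larger of $u_i$, $1-u_i$ exceeds $1/2$), whence $\norm{x^n-[x^n]_{SR}}_2\le c\,p(u^n)$ with $c$ depending on $l$ and the operator norm of $f$; and $\tilde{J}$ is Lipschitz on the compact set $X$ with some constant $L$. Feeding these into the \texttt{if}-condition gives
\[
\frac{1}{\varepsilon^n}\,p(u^n)\;\le\;\tilde{J}([x^n]_{SR})-\tilde{J}(x^n)+\varepsilon^n\norm{x^n-[x^n]_{SR}}_2\;\le\;(L+\varepsilon^0)\,c\,p(u^n),
\]
and since $x^n\notin W$ forces $p(u^n)>0$, dividing yields $1/\varepsilon^n\le (L+\varepsilon^0)c$, contradicting $\varepsilon^n\to 0$. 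Without this argument (or an explicit appeal to the corresponding lemma in \cite{Lucidi_2011}) the proposal does not establish the proposition.

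Once that lemma is in place, eventually $\varepsilon^n\equiv\bar\varepsilon$ and $\delta^n\downarrow 0$, and your Case~B reasoning finishes the proof exactly as you wrote it; note that it never actually needs the exactness result of Proposition~\ref{prop:Equiv_MINLPpaper}, since integer feasibility of $x^*$ comes directly from $\bar\varepsilon\norm{x^n-[x^n]_{SR}}_2<\delta^n\to 0$ and global optimality over $W$ from $\Jt(\cdot;\bar\varepsilon)\equiv\tilde{J}$ on $W$. For comparison, the paper does not reconstruct any of this: its proof is the one-line reduction to \cite[Corollary 1]{Lucidi_2011}, with Proposition~\ref{prop:SR_MINLPpaper} supplying the only problem-specific verification.
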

\begin{proof}
	Using Proposition \ref{prop:SR_MINLPpaper}, \REV{we can easily get \cite[Lemma 1]{Lucidi_2011} proven. Hence,}   the statement follows from \REV{\cite[Theorem 2]{Lucidi_2011}} and \cite[Corollary $1$]{Lucidi_2011}.
\end{proof}

\subsection{Development of the improved penalty algorithm (IPA)}
\label{sec:3_2_MINLPpaper}

We want to stress that the EXP algorithm considered in the previous section needs to calculate a \dgopt in Step 1 of each iteration. As local minima are introduced around integer points in \eqref{eq:MINLP_penalty} for sufficiently large values of $\varepsilon$, finding a \dgopt requires the use of a global deterministic continuous optimization solver. Thus, the EXP algorithm, albeit  providing a theoretical framework for when to increase the amount of penalization and when to search for a better minimizer, might be too costly, especially in a large-scale MIPDECO context. 

This is our motivation to drop the requirement for a \dgopt in Step 1 of Algorithm \ref{algo:EXP_Algorithm_MINLPpaper} and compute an iterate $x^n\in X$ that simply reduces the objective function (i.e., $\Jt(x^n;\varepsilon^n) < \Jt(x^{n-1};\varepsilon^n)$). In order to do so, we employ a probabilistic approach that, in each iteration, aims at improving the current iterate by perturbing it and utilizing this perturbation as initial guess for a tailored local optimization solver (see Section \ref{sec:4_2_MINLPpaper} for a detailed description of the solver). Do note that this strategy is closely connected to classic basin hopping or iterated local search strategies, see, e.g.,  \cite{grosso2007population,leary2000global}, for global optimization problems.
By combining these two ideas, we end up with  Sub-Algorithm \ref{algo:Perturbation} that is then invoked in Step 1 of the novel \textit{improved penalty algorithm (IPA)}, i.e., \REV{the combination of the Algorithms \ref{algo:some_Bologna_Algorithm} \& \ref{algo:Perturbation}} reported below.



\begin{algorithm}[ht]
	\caption{Improved penalty algorithm($x^0\in X$, $\varepsilon^0 >0$, $\sigma\in (0,1)$, \REV{$\pmax\in\N$})}
	\begin{algorithmic}[1]
		\label{algo:some_Bologna_Algorithm}
		\STATE{$n = 0$, $x^n = x^0$, $\varepsilon^n = \varepsilon^0$}
		\STATE{\textbf{Step 1.} Call Algorithm \ref{algo:Perturbation}($x^n$, \REV{$\pmax$}, $\varepsilon^n$) to generate a new iterate $x^{n+1}$}.
		\STATE{\textbf{Step 2.}}
		\IF{$x^{n+1}\notin W$ \textbf{and} $\Jt(x^{n+1};\varepsilon^n) - \Jt([x^{n+1}]_{SR};\varepsilon^n) \leq \varepsilon^n \norm{x^{n+1} - [x^{n+1}]_{SR}}_2$}
		\STATE{$\varepsilon^{n+1} = \sigma\varepsilon^n$}
		\ELSE
		\STATE{$\varepsilon^{n+1} = \varepsilon^n$}
		\ENDIF
		\STATE{\textbf{Step 3.}}
		\IF{$x^n \REV{=} x^{n+1}$}
		\RETURN{$[x^{n+1}]_{SR}$}
		\ELSE
		\STATE{Set $n = n+1$ and go to Step 1.}
		\ENDIF
	\end{algorithmic}
\end{algorithm}

\begin{algorithm}[ht]
	\renewcommand{\thealgorithm}{\ref{algo:some_Bologna_Algorithm}.a} 
	\caption{Reduction via perturbation($x\in X$, \REV{$\pmax\in\N$}, $\varepsilon >0$)}
	\begin{algorithmic}[1]
		\label{algo:Perturbation}
		\STATE{$x^{init} = x$}
		\FOR{$j=1,\dots, \pmax$}
		\STATE{Use a local optimization solver to determine a solution $x^{loc}$ of \eqref{eq:MINLP_penalty} for $\varepsilon$ using $x^{init}$ as initial guess.}
		\IF{$\Jt(x^{loc};\varepsilon) < \Jt(x;\varepsilon)$}
		\RETURN{$x^{loc}$}
		\ELSE
		\STATE{Generate a point $x^{pert} = Perturbation(x^{loc})$  and set $x^{init} = x^{pert}$.}
		\ENDIF
		\ENDFOR
		\RETURN{$x$}
	\end{algorithmic}
\end{algorithm}

\begin{remark} Clearly, there is a trade-off between the two algorithms: while Algorithm \ref{algo:EXP_Algorithm_MINLPpaper} guarantees deterministic convergence, it is unable to tackle large-scale problems. This downside is then lifted in  Algorithm \ref{algo:some_Bologna_Algorithm} thanks to the combination of a local solver and a probabilistic global search approach. \REV{Here, the probabilistic nature of Algorithm \ref{algo:some_Bologna_Algorithm} lies in the perturbation operation in line $7$ of Algorithm \ref{algo:Perturbation} and we will go into detail in Section \ref{sec:4_1_MINLPpaper}.}
As a result, no deterministic convergence property is available for the overall IPA. Nevertheless, the framework underlying the EXP algorithm of when to increase the penalization and when to look for a better minimizer still supports the novel Algorithm \ref{algo:some_Bologna_Algorithm}.
\end{remark}

\noindent Do note that if the for-loop in Algorithm \ref{algo:Perturbation} \REV{reaches the iteration limit} (and thus no better iterate was found after $\pmax\REV{\in\N}$ perturbations), the algorithm terminates with $x$, which was the input iterate. In that case it is $x^{n+1} \REV{=} x^n$ and the overall Algorithm \ref{algo:some_Bologna_Algorithm} then terminates. Therefore, the perturbation strategy in Algorithm \ref{algo:Perturbation} together with the choice of $\pmax$ give the information at what point no further reduction in the objective function can be found. Algorithm \ref{algo:Perturbation} does not specify a perturbation strategy in line $7$ and one can develop a strategy that \REV{is beneficial for one's model problem.} We will present our strategy in the upcoming Section \ref{sec:4_1_MINLPpaper}.

While $\varepsilon$ is decreased during Algorithm \ref{algo:some_Bologna_Algorithm} (and thus the amount of penalization is increased), \REV{the concave penalty term \eqref{eq:penaltyterm_MINLPpaper} grows larger and introduces local minima to the objective function $\Jt(x;\varepsilon)$. As $\varepsilon$ is further decreased, the shape of the objective function continues to change such that these local minima then move towards the integer points (follows from the definition of the penalty term).}
Due to this behavior, the condition $\Jt(x^{loc};\varepsilon^n) < \Jt(x;\varepsilon^n)$ in line $4$ of Algorithm \ref{algo:Perturbation} is always fulfilled as long as $\varepsilon^n < \varepsilon^{n-1}$ holds in Algorithm \ref{algo:some_Bologna_Algorithm}. 
\REV{Thus, as long as it was $\varepsilon^n < \varepsilon^{n-1}$ in Algorithm \ref{algo:some_Bologna_Algorithm}, the for-loop in Algorithm \ref{algo:Perturbation} terminates after the first iteration and the perturbation loop is not invoked.} 
\REV{As a result}, we expect Algorithm \ref{algo:some_Bologna_Algorithm} to have a two-phase behavior: in the first phase, the penalization is increased \REV{due to line $5$ of Algorithm \ref{algo:some_Bologna_Algorithm}} until a feasible \REV{integer} iterate $x^{n+1}\in W$ is found \REV{(due to the nature of the penalty term this has to happen for small enough values of $\varepsilon$)}.
\REV{Do note that, as long as the current iterate is not too close to an integer, the second condition of line $4$ is also fulfilled (due to the shape of the objective function the left hand side is then negative).}
\REV{If an iterate $x^{n+1}\in W$ is found (or one is close enough such that the second condition on line $4$ is violated), line $7$ of Algorithm \ref{algo:some_Bologna_Algorithm} keeps the amount of penalization such that the shape of the objective function remains the same and a better iterate can only be found via the perturbation strategy inside Algorithm \ref{algo:Perturbation} in Step 1 of Algorithm \ref{algo:some_Bologna_Algorithm} (after line $7$ of Algorithm \ref{algo:some_Bologna_Algorithm}, Algorithm \ref{algo:Perturbation} is called with $\varepsilon^n = \varepsilon^{n-1}$ such that the first iteration of the for loop simply reproduces the local minimum which was the input iterate).}
\REV{Thus}, Algorithm \ref{algo:Perturbation} then tries to improve the current iterate by perturbing it and restarting the local solver with this perturbed iterate. This way, one wants to escape bad basins of attraction of $\Jt$ and then move towards better local \REV{minimizers} and eventually the global one.


Finally, we want to mention that a new iterate $x^{n+1} = \left[y^{n+1},u^{n+1}\right]\transp$ found by Algorithm \ref{algo:Perturbation} is always feasible so that $x^{n+1}\in X$. Thus, the criterion $x^{n+1}\notin W$ in line $4$ of Algorithm \ref{algo:some_Bologna_Algorithm} can, in an actual implementation, be replaced by 
$$\norm{u^{n+1} - [u^{n+1}]_{SR}}_\infty > \efeas$$
with some feasibility tolerance $\efeas$. Thus, it is reasonable to return $[x^{n+1}]_{SR}$ such that the control of our output iterate is always integer and respects the knapsack constraint.

\section{Algorithmic details, local solver, and numerical setup}
\label{sec:4_MINLPpaper}

We begin with a discussion on various details of our implementation of the IPA 
including the perturbation strategy and the local solver. Afterwards, we shortly introduce two other solution strategies for  \eqref{eq:MINLP_MINLPpaper} and discuss the setup for the numerical investigation that will be carried out in Section \ref{sec:5_MINLPpaper}.

\subsection{Implementation details of the IPA}
\label{sec:4_1_MINLPpaper}

We start with the presentation of our perturbation strategy used in Algorithm \ref{algo:Perturbation}. The details are described in Algorithm \ref{algo:flipping} \REV{and since the algorithm has $\theta\in\N$ as an input, this is consequently another input of the overall IPA.}

\begin{algorithm}[ht]
	\renewcommand{\thealgorithm}{\ref{algo:some_Bologna_Algorithm}.b} 
	\caption{Perturbation($x\in X$\REV{, $\theta\in\N$})}
	\begin{algorithmic}[1]
		\label{algo:flipping}
		\STATE{Split $x=(y,u)\transp$ into the state $y\in\R^N$ and control $u\in\R^l$. Define $u^{pert} := u$.}
		\STATE{Find $I_{S}$, the set containing the indices of the entries of $u$ that are larger than $\frac{1}{2}$.}
		\FOR{$j=1,\dots, \min\{\vert I_S\vert,\theta\}$}
		\STATE{Randomly select $\hat{\imath}\in I_{S}$.}
		\STATE{Define $I_{adj}$ the set of indices corresponding to sources \textit{adjacent} to $\tilde{x}_{\hat{\imath}}$.}
		\STATE{Randomly select $\hat{\imath}_{adj}\in I_{adj}$.}
		\STATE{Set $\left(u^{pert}\right)_{\hat{\imath}}$ to a randomly chosen value smaller than $\frac{1}{2}$.}
		\STATE{Set $\left(u^{pert}\right)_{\hat{\imath}_{adj}}$ to a randomly chosen value larger than $\frac{1}{2}$.}
		\STATE{Remove $\hat{\imath}$ from $I_{S}$.}
		\ENDFOR
		\STATE{Compute the state $y^{pert}$ corresponding to $u^{pert}$, i.e., $y^{pert} = f(u^{pert})$.}
		\RETURN{$x^{pert} := \left[y^{pert},u^{pert}\right]\transp$}
	\end{algorithmic}
\end{algorithm}

\noindent As mentioned in Section \ref{sec:3_2_MINLPpaper}, this perturbation strategy should only be called upon in the later stages of Algorithm \ref{algo:some_Bologna_Algorithm} where the amount of penalization is significant enough such that the set $I_S$ in Algorithm \ref{algo:flipping} is not empty.
When Algorithm \ref{algo:flipping} is called by Algorithm \ref{algo:Perturbation} inside Algorithm \ref{algo:some_Bologna_Algorithm}, $x$ is equal to the current iterate $x^n = \left[y^{n},u^{n}\right]\transp$. The algorithm then essentially performs $\theta\in\N$ \textit{flips} to the current control $u^n$, where a flip is one iteration of the for-loop of Algorithm \ref{algo:flipping}, i.e., a large value of $u^n$ is set to a small value and an entry of $u^n$ corresponding to a source that is \textit{adjacent} to the source corresponding to the large value is set to a large value. By this strategy the resulting perturbation $x^{pert}$ possibly lies outside the current basin of attraction and therefore might be an initial guess for the local solver in Algorithm \ref{algo:Perturbation} resulting in a point with a potentially better function value. It remains to explain what we mean by \textit{adjacent} in the above context.
\REV{
\begin{definition}
\label{def:adjind_MINLPpaper}    
Given a collection of points $x_1,\dots ,x_n \in \Omega$ and a radius $r > 0$, we define for a point $x_i$ the set of \textit{adjacent indices}
$$
I_{adj} := \{j\in \{1,\dots, n\} \mid j\neq i,~ \norm{x_i - x_j}_\infty \leq r\}.
$$
\end{definition}
}

\noindent\REV{Inside Algorithm \ref{algo:flipping}, we can thus obtain $I_{adj}$ via Definition \ref{def:adjind_MINLPpaper} where we use the centers $\tilde{x}_1,\dots, \tilde{x}_l\in \Omega$ of our source functions as points. Assuming that they are arranged in a uniform $m\times m$ grid, a possible radius might be $r = \frac{1}{m}$.}  


Although the perturbation strategy presented so far depends on the uniform grid of source centers in order to determine the index set $I_{adj}$, the underlying concept of this \textit{flipping} does not depend on the chosen modelling. The large component $\left(u^{pert}\right)_{\hat{\imath}}$ of the control can often be associated to a spatial counterpart denoted, for the purpose of clarity, as $x_{\hat{\imath}}$ here. In our case this is $x_{\hat{\imath}} = \tilde{x}_{\hat{\imath}}$, the center of the Gaussian source function. If the control would for example be modeled via piecewise constant functions $\{\chi_i(x)\}_{i=1}^l$ (as in \cite{Buchheim_2018} or Section \ref{sec:5_2_MINLPpaper}), $x_{\hat{\imath}}$ could be the center of the patch of the subdomain that corresponds to $\chi_{\hat{\imath}}(x)$. If the control would be distributed, $x_{\hat{\imath}}$ would be the vertex of the grid that corresponds to $u_{\hat{\imath}}$. 
\REV{Thus, one would always find a set of points for Definition \ref{def:adjind_MINLPpaper} and could then select a proper radius (and of course a suitable norm).}
With this interpretation, as long as the control can be associated to spatial counterparts of the domain $\Omega$, the presented perturbation strategy can easily be applied to different kinds of controls, models, and domains.

Finally, we found it effective in our experiments to set $\left(u^{pert}\right)_{\hat{\imath}}$ to a random value in $[0.1,0.2]$ during Algorithm \ref{algo:flipping}. Afterwards, we calculate $d_{\hat{\imath}} := \vert \left(u\right)_{\hat{\imath}} - \left(u^{pert}\right)_{\hat{\imath}} \vert$ and set $\left(u^{pert}\right)_{\hat{\imath}_{adj}}$ to a random value in $[d_{\hat{\imath}}-0.1,d_{\hat{\imath}}]$. This strategy ensures that the perturbed control $u^{pert}$ is still feasible (especially fulfilling the knapsack constraint). Furthermore, this prohibits the perturbed control of having values that are too close to $0$ or $1$. By this, $x^{pert}$ is then an initial guess for the local solver in Algorithm \ref{algo:Perturbation} that (possibly) lies outside the current basin of attraction and is at the same time not too close to other local minimizers (at this stage of the IPA there are local minimizers nearby all integer points).

In the remainder of this section, we want to discuss the termination of Algorithm \ref{algo:Perturbation} and thus Algorithm \ref{algo:some_Bologna_Algorithm}.
The criterion $\Jt(x^{loc};\varepsilon^n) < \Jt(x^n;\varepsilon^n)$ in Algorithm \ref{algo:Perturbation} (as it is called inside Algorithm \ref{algo:some_Bologna_Algorithm} with $x=x^n$ and $\varepsilon = \varepsilon^n$) can be numerically challenging in an actual implementation. Although the criterion should be fulfilled when it was $\varepsilon^n < \varepsilon^{n-1}$ in Algorithm \ref{algo:some_Bologna_Algorithm} as mentioned in Section \ref{sec:3_2_MINLPpaper}, this might not be the case numerically since any local solver used in Algorithm \ref{algo:Perturbation} only computes $x^{loc} = \left[y^{loc}, u^{loc}\right]\transp$ up to some internal tolerance. Furthermore, if $x^n$ is close to an integer already, we do not want to accidentally fulfill $\Jt(x^{loc};\varepsilon^n) < \Jt(x^n;\varepsilon^n)$ due to numerical effects although $[u^{loc}] \REV{=} [u^n]$ such that no progress towards a better integer solution would be made. To cover both of these cases in our implementation, we first calculate the two distances 
$$
d^{loc} := \norm{u^{loc} - u^n}_\infty\quad\mbox{and}\quad d_{SR}^{loc} := \norm{[u^{loc}]_{SR} - [u^n]_{SR}}_\infty
$$ 
and replace $\Jt(x^{loc};\varepsilon^n) < \Jt(x^n;\varepsilon^n)$ by the following two criteria and thus return $x^{loc}$ in Algorithm \ref{algo:Perturbation} if one of these is fulfilled.
\begin{enumerate}
	\item If $\varepsilon^n < \varepsilon^{n-1}$ and either $\Jt(x^{loc};\varepsilon^n) < \Jt(x^n;\varepsilon^n)$, $d^{loc} < 0.2$, or $d_{SR}^{loc} = 0$ are fulfilled.
	\item If $\varepsilon^n = \varepsilon^{n-1}$ and $d_{SR}^{loc} \neq 0$, as well as $\Jt(x^{loc};\varepsilon^n) < \Jt(x^n;\varepsilon^n)$, and additionally  $\Jt([x^{loc}]_{SR};\varepsilon^n) < \Jt([x^n]_{SR};\varepsilon^n)$ are fulfilled.
\end{enumerate}
The first criterion targets the case when $\Jt(x^{loc};\varepsilon^n) \nless \Jt(x^n;\varepsilon^n)$ numerically (although it was $\varepsilon^n < \varepsilon^{n-1}$ in Algorithm \ref{algo:some_Bologna_Algorithm}) and thus also accepts iterates that are either close to, or presumably in the same basin of attraction as, the previous iterate. We mention that this usually happens during the first phase of the IPA where the amount of penalization is increased (due to $\varepsilon^{n+1} = \sigma\varepsilon^n$) and is not yet large enough for the local solver to produce near integer solutions fulfilling $\norm{u^{loc} - [u^{loc}]_{SR}}_\infty \leq \efeas$.
As a result it is not necessary to search for better solutions via the perturbation strategy such that this criterion tries to prevent non-productive iterations in Algorithm \ref{algo:Perturbation}.
If a feasible iterate was found and the amount of penalization was not increased, the second criterion only accepts better iterates that lie outside the current basin of attraction and thus enforces progress towards a better integer solution and should prevent the algorithm from getting stuck in an unsatisfactory local minimum.

\subsection{Implementation of the local solver: an interior point framework for the large-scale setting}
\label{sec:4_2_MINLPpaper}

We now discuss our implementation of line $3$ in Algorithm \ref{algo:Perturbation}, that is the choice of the local solver for finding a solution $x^{loc}$ of \eqref{eq:MINLP_penalty} for a given $\varepsilon$.
Due to the structure of \eqref{eq:MINLP_penalty}, which was equivalent to \eqref{eq:MINLP_raw_penalty_MINLPpaper}, we opt for an interior point method (IPM) that is particularly suitable for solving quadratic programming problems and it also allows the use of an efficient preconditioner in the linear algebra phase, see, e.g., \cite{nocedalwright1999numericalopt,Gondzio_2012}. 
Following \cite{Gondzio_2012}, we present the derivation of a standard interior point method for the following reformulation of problem \eqref{eq:MINLP_penalty}, that is 
\begin{equation*}
\begin{array}{cl}\label{pb_pe2}
\displaystyle\min_{y\in \R^N,u\in \R^l, z\in \R} &  \Jt(y,u;\varepsilon) = \frac 1 2 (y-y_d)\transp M (y-y_d)  + \frac{1}{\varepsilon} ({\bf 1}\transp u-u\transp u), \\
\mbox{ s.t. } 
& Ky = M\Phi u \qquad \mbox{ and } \qquad  {\bf 1}\transp u + z - S =0,\\
&   \\
& 0 \le u \le 1 \qquad \mbox{ and } \qquad z \ge 0,
\end{array}
\end{equation*}
where $z\geq 0$ is a scalar slack variable and the notation has been adapted to distinguish the control $u$ and the state $y$. For the sake of generality we include the case when the stiffness matrix $K$ is  non-symmetric.
The main idea of an IPM is the elimination of the inequality constraints on $u$ and $z$ via the introduction of corresponding logarithmic barrier functions. The Lagrangian associated with the barrier subproblem reads
\begin{align*}
L_{\mu,\varepsilon}(y,u,z; p,q)  = &\Jt(y,u;\varepsilon) + p\transp (Ky - M\Phi u) + q ({\bf 1}\transp u + z - S)\\
&-\mu \sum_{i=1}^l \log(u_i) - \mu \sum_{i=1}^l \log(1 - u_i) - \mu \log (z), 
\end{align*}
where $p\in\R^N$ is the Lagrange multiplier (or adjoint variable) associated with the state equation, $q \in \R $ is the Lagrange multiplier associated with the scalar equation ${\bf 1}\transp u + z - S =0$, and $\mu > 0$ is the barrier parameter that controls the relation between the barrier term and the original objective $\Jt(y,u;\varepsilon)$. As the method progresses,  $\mu$ is decreased towards zero.

First-order optimality conditions are derived by applying duality theory resulting in a nonlinear system parametrized by $\mu$ as detailed below. Thus, differentiating $L_{\mu,\varepsilon}$ with respect to the variables $y$, $u$, $z$, $p$, and $q$ gives the nonlinear system
\begin{subequations}
	\label{eq:kkt}
	\begin{eqnarray}
	M y - M y_d + K\transp  p  &= 0, \\ 
	\frac{1}{\varepsilon}({\bf 1} - 2u) - \Phi\transp M p + q {\bf 1} - \lambda_{u,0} + \lambda_{u,1} &= 0,\\ 
	q - \lambda_{z,0}  = 0, 
	\quad Ky - M\Phi u = 0,\quad {\bf 1}\transp u + z - S &= 0, 
	\end{eqnarray}
\end{subequations}
where the Lagrange multipliers $\lambda_{u,0}$, $\lambda_{u,1}\in\R^l$ and $\lambda_{z,0}\in\R$ are defined as
\begin{equation}\label{lagmul}
(\lambda_{u,0})_i := \frac{\mu}{u_i},~ (\lambda_{u,1})_{i} := \frac{\mu}{1 - u_i}, \quad\mbox{for } i = 1,\dots , l,\quad  \mbox{and}\quad
\lambda_{z,0} := \frac{\mu}{z}.
\end{equation}
Furthermore, the bound constraints $\lambda_{u,0} \geq 0$, $\lambda_{u,1} \geq 0$, and $\lambda_{z,0} \geq 0$ then enforce the constraints on $u$ and $z$.

The crucial step of deriving the IPM is the application of Newton's method to the above nonlinear system. Letting $y$, $u$, $z$, $p$, $q$, $\lambda_{u,0}$, $\lambda_{u,1}$, and $\lambda_{z,0}$ denote the most recent Newton iterates, these are then updated in each iteration by computing the corresponding Newton steps $\Delta y$, $\Delta u$, $\Delta z$, $\Delta p$, $\Delta q$, $\Delta \lambda_{u,0}$, $\Delta \lambda_{u,1}$, and $\Delta \lambda_{z,0}$ through the solution of the following Newton system
\begin{align}
\label{eq:NewtonSystem}
\begin{bmatrix}
M            &    0     &   0   &   K\transp   & 0  \\
0             &  -\frac 2 \varepsilon I_l + \Theta_u &   0 & -\Phi\transp M  & {\bf 1} \\
0             &  0                       &   \theta_z   & 0           &  1    \\
K          &  -M\Phi  &   0   &  0          &  0    \\
0            &  {\bf 1}\transp       &   1  &   0     & 0
\end{bmatrix}
&\begin{bmatrix}
\Delta y\\
\Delta u \\
\Delta z \\
\Delta p \\
\Delta q
\end{bmatrix} \\
\nonumber &\hspace*{-4em}=
-\begin{bmatrix}
M y - M y_d + K\transp  p \\
\frac{1}{\varepsilon}({\bf 1} - 2u) -\Phi\transp M  p + q {\bf 1} - \lambda_{u,0} + \lambda_{u,1} \\		
q - \lambda_{z,0} \\ 
Ky-M\Phi u\\
{\bf 1}\transp u + z - S
\end{bmatrix}.
\end{align}
Here, $\Theta_u := U^{-1} \Lambda_{u,0} + (I_l - U )^{-1} \Lambda_{u,1}$, $\theta_z := \lambda_{z,0}/z$, and $U,~\Lambda_{u,0}$, and $\Lambda_{u,1}$ are diagonal matrices with the most recent iterates of $u$, $\lambda_{u,0}$, and $\lambda_{u,1}$ appearing on their diagonal entries.
The matrices $\Theta_u$ and $\theta_z >0$, while being positive definite, are typically very ill-conditioned.
Also, due to the term $-\frac 2 \varepsilon I_l $, the block  $-\frac 2 \varepsilon I_l + \Theta_u$
may be indefinite, especially for small values of $\varepsilon$.
Following suggestions in \cite[Chapter 19.3]{nocedalwright1999numericalopt} to handle
nonconvexities in the objective function by promoting the computation of descent directions, we heuristically keep the diagonal matrix $-\frac 2 \varepsilon I_l + \Theta_u$ positive definite by setting any negative values to some value $\gamma > 0$.
Once the above system is solved, one can compute the steps for the Lagrange multipliers via
\begin{align*}
\Delta \lambda_{u,0} & = - U^{-1} \Lambda_{u,0} \Delta u - \lambda_{u,0} + \mu U^{-1}{\bf 1},\\
\Delta \lambda_{u,1} & = (I_l-U)^{-1} \Lambda_{u,1} \Delta u - \lambda_{u,1} + \mu (I_l-U)^{-1}{\bf 1},\\
\Delta \lambda_{z,0} & = - (\lambda_{z,0}/z) \Delta z - \lambda_{z,0} + \mu/z.
\end{align*}
A general IPM implementation only involves one Newton step per iteration. Thus, after choosing suitable step-lengths so that the updated iterates remain feasible, the new iterates can be calculated and the barrier parameter $\mu$ is reduced, thus concluding one iteration of the IPM.
Finally, we report the primal and dual feasibilities 
\begin{equation*}
\label{eq:prdufeas_MINLPpaper}
\xi_p:=  \begin{bmatrix} Ky - M\Phi u \, \\
{\bf 1}\transp u+ z- S   
\end{bmatrix}
\quad \mbox{and}\quad 
\xi_d:= \begin{bmatrix}
M y- M y_d + K\transp  p   \\
\frac 1 \varepsilon ({\bf 1} - 2 u) -\Phi\transp M p+ q{\bf 1} - \lambda_{u,0} + \lambda_{u,1} \\                
q- \lambda_{z,0}  
\end{bmatrix},
\end{equation*}
as well as the complementarity gap
\begin{equation*}
\xi_c :=
\begin{bmatrix}
U\lambda_{u,0} - \mu{\bf 1}, & (I_l - U) \lambda_{u,1} - \mu{\bf 1}, & z\lambda_{z,0} - \mu
\end{bmatrix}\transp,
\end{equation*}
as measuring the change in the norms of $\xi_p$, $\xi_d$ and $\xi_c$ allows us to monitor the convergence of the entire process. 

Clearly, the computational burden of this IPM lies in the solution of the Newton system \eqref{eq:NewtonSystem} and our strategy regarding this issue is twofold: on the one hand we employ an inexact Newton-Krylov strategy for the solution of the nonlinear system \eqref{eq:kkt} and on the other hand we design a suitable preconditioner to speed up the convergence of our Krylov method of choice for the Newton system \eqref{eq:NewtonSystem}.
Regarding the inexactness strategy, the idea is to increase the accuracy in the solution of the
Newton equation as $\mu$ decreases. This minimizes the occurrence of so-called \textit{oversolving} in the first interior point steps.
Global convergence results to a solution of the first-order optimality conditions for the resulting inexact IPM can be found in \cite{bellavia1998inexact}. 


\paragraph{Preconditioning for the interior point method}
We will now present our linear algebra strategy for the solution of the Newton system \eqref{eq:NewtonSystem}, i.e., we choose our Krylov method and design a suitable preconditioner.
Investigating the system matrix of the Newton system, we observe that with the choice
$$
A=
\begin{bmatrix}
M            &    0     &   0   \\
0             &  -\frac 2 \varepsilon I_l + \Theta_u &   0 \\
0             &  0                       &   \theta_z     \\
\end{bmatrix},\qquad
B=
\begin{bmatrix}
K          &  -M\Phi  &   0     \\
0            &  {\bf 1}\transp       &   1   \\              
\end{bmatrix}
$$
we have to solve a saddle point system
$
\begin{bmatrix}
A&B^T\\
B&0
\end{bmatrix}.
$
As discussed already, the block 
$-\frac 2 \varepsilon I_l + \Theta_u$ is kept positive definite throughout the interior point method, so that we can assume that $A$ is positive definite.

Such systems are a cornerstone of applied mathematics and appear in many application scenarios, see, e.g., \cite{book::andy,BenGolLie05}. While the system is symmetric and we could apply \minres \cite{minres}, we here use a nonsymmetric method, namely \gmres \cite{gmres}, because we found that a block-triangular preconditioner 
$
\begin{bmatrix}
\hat{A}&0\\
B&-\hat{S}
\end{bmatrix}
$
performs better in our experiments. It would also be possible to use symmetric solvers, which are based on nonstandard inner products, see, e.g., \cite{stoll::bp+comb,dgsw08}. 
We here focus on the design of the approximations for the $(1,1)$-block $\hat{A}\approx A$ and for the Schur-complement
$$
\hat{S}\approx S=
\begin{bmatrix}
K M^{-1} K\transp + M\Phi (-\frac 2 \varepsilon I_l + \Theta_u)^{-1}\Phi\transp M &   0     \\
0  &  {\bf 1}\transp(-\frac 2 \varepsilon I_l + \Theta_u)^{-1}{\bf 1}+\theta_z^{-1}   \\  
\end{bmatrix}.
$$
In our preconditioning approach, we neglect the term ${\bf 1}\transp(-\frac 2 \varepsilon I_l + \Theta_u)^{-1}{\bf 1}+\theta_z^{-1} $ and set the preconditioner to $1$ as this typically does not result in many additional iterations and we avoid dealing with the ill-conditioning of both $(-\frac 2 \varepsilon I_l + \Theta_u)$ and $\theta_z$. In our setup here we thus end up with the following approximation
$$
\hat{S}=
\begin{bmatrix}
K M^{-1} K\transp  &  0\\
0 & 1\\  
\end{bmatrix}\quad\mbox{and}\quad A=\hat{A}.
$$
We close this section with two short remarks.

\begin{remark}
	The purpose of this basic preconditioner is to speed up the solution process of our IPM, but for future research we need to enhance this based on recent progresses in preconditioning for interior point methods, see, e.g., \cite{pearson2020interior,pearson2019block,bergamaschi2004preconditioning}.
\end{remark}
\begin{remark}
	\label{rem:nonlinIPM_MINLPpaper}
	Although this IPM together with the preconditioner are formulated for the penalty formulation \eqref{eq:MINLP_penalty} that refers to the model problem \eqref{eq:PDE_MINLPpaper}, it is clear that the IPM generalizes to general linear PDE constraints (in fact, Section \ref{sec:5_2_MINLPpaper} will contain experiments for a convection-diffusion problem resulting in a nonsymmetric stiffness matrix $K$). Furthermore, the IPM and the preconditioner can be formally adapted to a nonlinear PDE constraint $F(y,u) = 0$, where $F:\R ^{N+l}\rightarrow \R^N$ is a smooth nonlinear function. One simply has to introduce $F'(y,u) \in \R^{N\times {(N+l)}}$, the Jacobian of $F$ as well as $F'_y \in \R^{N\times N}$ and $F'_u \in \R^{N\times l}$, the submatrices of the Jacobian such that $F'(y,u) = [F'_y,\, F'_u]$. We then obtain the IPM for this nonlinear problem by replacing in the Newton system \eqref{eq:NewtonSystem} $K\transp$ by $(F'_y)\transp $, $-M\Phi$ by $F'_u$ (and thus $-\Phi\transp M$ by $(F'_u)\transp$), and $Ky-M\Phi u$ by $F(y,u)$. In the nonlinear case, convergence of the IPM is ensured when embedded in suitable globalization strategies \cite{nocedalwright1999numericalopt}.
\end{remark}

\subsection{Simple penalty and \BnB method}
\label{sec:4_3_MINLPpaper}
We shortly discuss a simple penalty approach and our \BnB solver of choice for the solution of \eqref{eq:MINLP_MINLPpaper} to which we want to compare our IPA in the numerical Section \ref{sec:5_MINLPpaper}.

Starting with the penalty formulation \eqref{eq:MINLP_penalty}, we follow the simple iterative penalization strategy already mentioned in the introduction: in each iteration, use a local solver to determine a solution of \eqref{eq:MINLP_penalty} which then is the next iterate; decrease the penalty parameter; stop if the control part of the new iterate is integer. This approach leads to the following \textit{penalty algorithm}, i.e., Algorithm \ref{algo:some_Algorithm}.


\addtocounter{algorithm}{-2} 
\begin{algorithm}
	\caption{Penalty($x^{0}\in X$, $\varepsilon^0>0$, $\sigma\in (0,1)$)}
	\begin{algorithmic}[1]
		\label{algo:some_Algorithm}
		\STATE{$n = 0$, $x^n = x^0$, $\varepsilon^n = \varepsilon^0$}
		\REPEAT
		\STATE{Use a local solver to determine a solution $x^{n+1}$ of \eqref{eq:MINLP_penalty} for $\varepsilon^n$ using $x^{n}$ as initial guess.}
		\STATE{$\varepsilon^{n+1} = \sigma\varepsilon^n$}
		\STATE{$n=n+1$}
		\UNTIL{$\norm{u^{n} - [u^{n}]_{SR}}_\infty < \efeas$}
		\RETURN{$[x^n]_{SR}$}
	\end{algorithmic}
\end{algorithm}

\noindent As already discussed in Section \ref{sec:3_2_MINLPpaper}, we use the criterion $\norm{u^{n} - [u^{n}]_{SR}}_\infty < \efeas$ instead of $x^n\in W$ to determine whether or not an integer iterate has been found and then return $[x^n]_{SR}$.
Algorithm \ref{algo:some_Algorithm} is a simplification of the IPA in several ways: the penalty parameter is reduced in every iteration, a new iterate $x^{n+1}$ generated by the local solver is always accepted as such, and the algorithm terminates as soon as an iterate $x^{n+1}\in W$ is found. 
Thus, Algorithm \ref{algo:some_Algorithm} has no theoretical justification, whereas Algorithm \ref{algo:some_Bologna_Algorithm} utilizes the theoretical framework of the EXP algorithm for the correct selection of the penalty parameter as well as a local iterative search strategy for the computation of the new iterate.


Finally, we choose \cplexmiqp the \BnB routine of CPLEX \cite{CPLEX} for quadratic mixed integer problems, as our \BnB solver for our numerical comparison in Section \ref{sec:5_MINLPpaper}.
We refer the reader to \cite{BnB_Masterpaper} for an elaborate overview of the \BnB framework and simply note that \cplexmiqp incorporates many algorithmic features lately developed to improve \BnB performance.


\subsection{Numerical setting and parameter choices}
\label{sec:4_4_MINLPpaper}

We present the setting in which the numerical experiments will be conducted including default parameter choices for the algorithms. If different choices are utilized, it will be mentioned.

We choose $\Omega := [0,1]^2$ as our computational domain. Regarding the Gaussian sources defined in \eqref{eq:gaussian_sources_MINLPpaper}, we choose $l=100$ sources with centers $\tilde{x}_1,\dots , \tilde{x}_l$ being arranged in a uniform $10\times 10$ grid \REV{with step size $\frac{1}{11}$ (resulting in a radius of $\frac{1}{10}$ for Definition \ref{def:adjind_MINLPpaper}).}
The height of the sources is $\kappa = 100$ and the width $\omega$ is chosen such that every source takes $5\%$ of its center-value at a neighboring center. We mention that this choice of height and width is motivated by \cite[Section 4.2]{wesselhoeft2017mixed}. The PDE \eqref{eq:PDE_MINLPpaper} is discretized using uniform piecewise linear finite elements with a step size of $2^{-7}$ (unless specified otherwise) resulting in $N=16641$ vertices.

Whenever a local solver is required, i.e, in Algorithms \ref{algo:Perturbation} and \ref{algo:some_Algorithm}, we use the IPM derived in Section \ref{sec:4_2_MINLPpaper}. 
The outer interior point iteration is stopped as soon as either
$\max\{\norm{\xi_p}_2, \norm{\xi_d}_2, \norm{\xi_c}_2\} \leq 10^{-6}$
or the safeguard $\mu \leq 10^{-15}$ is triggered.
Furthermore, starting from an initial $\mu=1$ we decrease $\mu$ by the factor $0.1$ in each outer interior point iteration. 
The inexactness is implemented by stopping GMRES when the norm of the unpreconditioned relative residual is below $\eta = \max\{\min\{10^{-1}, \mu\}, 10^{-10}\}$. Finally, the diagonal block $ -\frac 2 \varepsilon I_l + \Theta_u $ in the Newton system (\ref{eq:NewtonSystem}) is kept positive definite by setting any negative values to $\gamma = 10^{-6}$ and the preconditioner proposed at the end of Section \ref{sec:4_2_MINLPpaper} is applied by performing the Cholesky decomposition of both $M$ and $K$ once at the beginning of the IPA process.

As initial guess for Algorithms \ref{algo:some_Bologna_Algorithm} and \ref{algo:some_Algorithm} the solution of \eqref{eq:MINLP_contrelax_MINLPaper} obtained by our IPM is used. Do note that this is not necessary since \eqref{eq:MINLP_penalty} for large enough $\varepsilon^0$ is usually still a convex problem in the first iteration of these algorithms so that any initial guess would be sufficient. 
Further default parameters are $\varepsilon^0 = 10^5$ for both algorithms as well as $\sigma = 0.9$ for Algorithm \ref{algo:some_Algorithm} and $\sigma = 0.7$ for Algorithm \ref{algo:some_Bologna_Algorithm}. The more conservative value of $\sigma$ for Algorithm \ref{algo:some_Algorithm} is necessary here, since with $\sigma$ being closer to $0$ one would risk increasing the amount of penalization too fast and thus possibly 'skipping' a good local minimum and settling for an unsatisfactory local minimum. Finally, we used the feasibility tolerance $\efeas = 0.1$.

Regarding \cplexmiqp, we use default options except that we set a time limit of $1$ hour (unless specified otherwise) and a memory limit of $16000$ megabytes for the search tree.
All experiments were conducted on a PC with 32 GB RAM and a QUAD-Core-Processor INTEL-Core-I7-4770 (4x 3400MHz, 8 MB Cache) utilizing Matlab 2019a via which CPLEX 12.9.0 was accessed.

\section{Numerical Experiments}
\label{sec:5_MINLPpaper}

We begin with two different experiments for our Poisson model problem \eqref{eq:MINLP_MINLPpaper} and then shortly discuss a convection-diffusion problem as well as the behaviour of our local solver. 

\subsection{Poisson model problem}
\label{sec:5_1_MINLPpaper}
In the first experiment we want to see that the IPA can indeed handle large-scale problems and convince ourselves that \cplexmiqp, the \BnB method of CPLEX introduced in Section \ref{sec:4_3_MINLPpaper}, can not handle large-scale problems.
In the second experiment we then carry out a detailed comparison of the IPA with the solution strategies presented in Section \ref{sec:4_3_MINLPpaper}.
We further mention that two more experiments were conducted that can be found in a previous version of this article\footnote{\url{https://arxiv.org/abs/1907.06462v2}}:
\begin{itemize}
	\item A parameter study for the IPA with respect to $\pmax\in\N$ and $\theta\in\N$ was conducted upon which $\pmax = 300$ and $\theta=3$ have been selected.
	\item The stochastic robustness of the IPA was investigated, i.e., how robust the solution quality and time is with respect to the random choices made. It was found that while different minima may be found by the IPA for the same problem instance, the minima are all of high quality and the difference in solution time is neglectable.
\end{itemize}

Do note that due to the different implementation languages included in these experiments, the reported computational times only give a qualitative information on the performance of the solvers.

In general, we create an instance of our optimal control problem by generating a desired state $\ybar$ that is a solution of (the discretized version of) \eqref{eq:PDE_MINLPpaper} with $S$ active sources in the right-hand side and the centers of these sources are randomly distributed over $\Ot = [0.1,0.9]^2$. The height and width of these sources coincide with the values that were used for the source-grid in Section \ref{sec:4_4_MINLPpaper}. Clearly, the combinatorial complexity of the optimization problem corresponding to such a desired state increases drastically for larger values of $S$.
To further illustrate the optimization problem here, Figure \ref{fig:figure1_MINLPpaper} exemplarily shows two desired states, one for $S=3$ and one for $S=20$, where the white stars depict $\tilde{x}_1,\dots , \tilde{x}_l$, the centers of the source grid introduced in Section \ref{sec:4_4_MINLPpaper}, and the red stars depict the centers of the $S$ active sources in $\ybar$.

\begin{figure}[ht]
	\centering
	\includegraphics[width=0.3\textwidth]{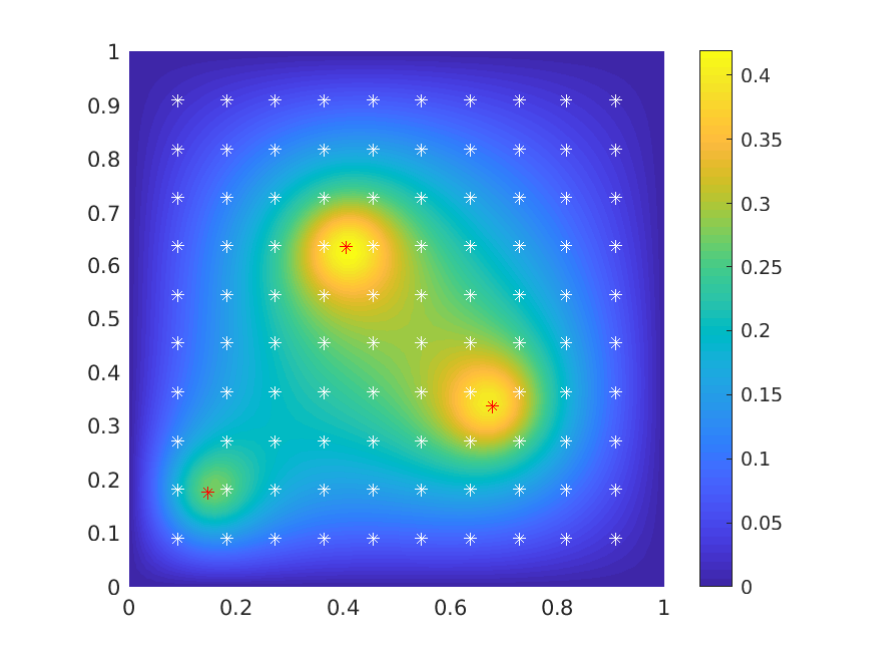}\qquad
	\includegraphics[width=0.3\textwidth]{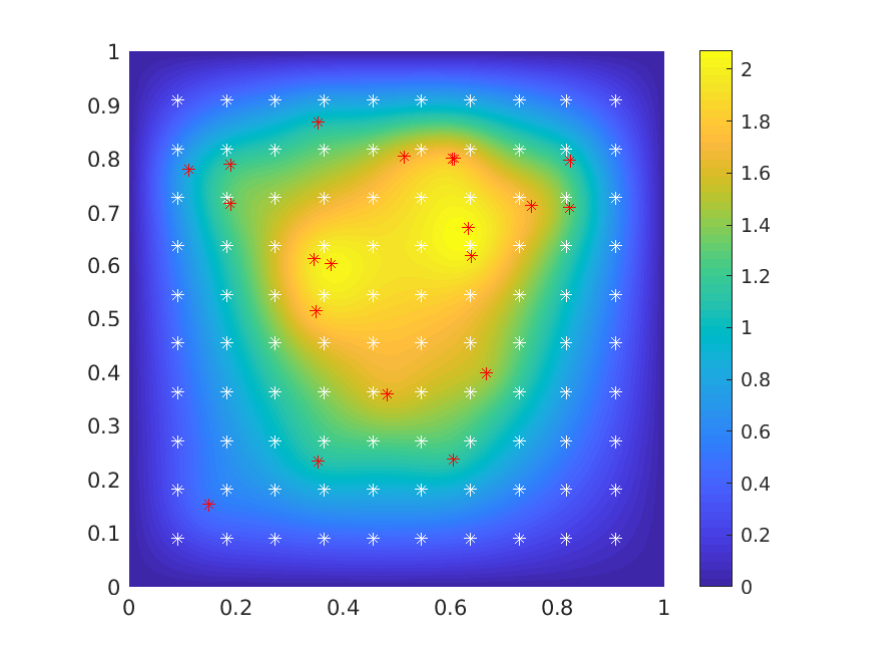}
	\caption{Exemplary desired states with $S=3$ active sources (left) and $S=20$ active sources (right) including the centers of the source grid (white stars) and the centers of the active sources of the respective desired state (red stars).}
	\label{fig:figure1_MINLPpaper}
\end{figure}

\paragraph{First experiment} 
We want to see that the IPA can handle large-scale problems and \cplexmiqp, the \BnB routine of CPLEX, can not. Therefore, we create a problem instance per value of $S\in\{3,10,20\}$ and per step-size $h\in\{2^{-7},2^{-8}\}$ of the FEM grid and solve each instance with the IPA, \cplexmiqp with a $1$ hour time limit, \cplexmiqp with a $10$ hour time limit, and (for comparison reasons) with the simple penalty approach from Algorithm \ref{algo:some_Algorithm}. Regarding the solution quality, the algorithm with the lowest objective function value is indicated with a '$\min$' in Table \ref{table:second_experiment_MINLPpaper} (or a '$\min^*$' if it was the global minimum) and for each other algorithm the relative error towards this minimum objective function value is then displayed. Furthermore, Table \ref{table:second_experiment_MINLPpaper} contains the run times in seconds for each algorithm in each instance, where in case of \cplexmiqp~ 'TL' indicates that the respective time limit was reached.

\begin{table}[ht]
	\centering
	\resizebox{\textwidth}{!}{%
		\begin{tabular}{c|c c|c c|c c|c c|c c|c c}
			\toprule
			\multicolumn{1}{c}{h} & \multicolumn{6}{|c|}{$2^{-7}$} & \multicolumn{6}{|c}{$2^{-8}$}\\
			\midrule
			\multicolumn{1}{c}{S} & \multicolumn{2}{|c|}{3} & \multicolumn{2}{|c|}{10} & \multicolumn{2}{|c|}{20} & \multicolumn{2}{|c|}{3} & \multicolumn{2}{|c|}{10} & \multicolumn{2}{|c}{20}\\
			\hline
			& rel\_err & time & rel\_err & time & rel\_err & time & rel\_err & time & rel\_err & time & rel\_err & time\\
			\hline
			\multirow{1}{*}{Penalty} & $\mathrm{\bf{min}}^*$ & 89 & $20\%$ & 163 & $57\%$ & 188 & $\mathrm{\bf{min}}$ & 541 & $8\%$ & 1101 & $12\%$ & 1400 \\
			\multirow{1}{*}{IPA} & $\mathrm{\bf{min}}^*$ & 925 & $13\%$ & 1035 & $\mathrm{\bf{min}}$ & 1143 & $\mathrm{\bf{min}}$ & 6219 & $\mathrm{\bf{min}}$ & 7550 & $\mathrm{\bf{min}}$ & 9190 \\
			\multirow{1}{*}{\cplexmiqp 1h} & $\mathrm{\bf{min}}^*$ & 1527 & $13\%$ & TL & $19\%$ & TL & $6805\%$ & TL & $45535\%$ & TL & $93718\%$ & TL \\
			\multirow{1}{*}{\cplexmiqp 10h} & - & - & $\mathrm{\bf{min}}$ & TL & $1\%$ & TL & $6805\%$ & TL & $45535\%$ & TL & $93718\%$ & TL \\
			\bottomrule
		\end{tabular}
	}
	\caption{Results of the first experiment. For each problem instance the algorithm with the lowest objective function value is indicated. The respective relative error of other algorithms as well as the solution times (in seconds) are furthermore reported.}
	\label{table:second_experiment_MINLPpaper} 
\end{table}

\noindent We observe that for $h=2^{-7}$ and $S=3$ all algorithms find the global minimum, although we stress that this is only a single problem instance which does not allow for a conclusive comparison with respect to solution quality. A more detailed comparison will be carried out in the next experiment. With an increase in $S$ (and thus an increase in the combinatorial complexity of the problem), \cplexmiqp, while hitting the prescribed time limit, is still able to provide good solutions, although our IPA is able to at least keep up.
Refining the FEM-mesh once and thus moving towards $h=2^{-8}$ (resulting in $N=66049$ instead of $N=16641$) results in \cplexmiqp not being able to handle the problem at all. The time limit is always reached and the algorithm (even given 10 hours time)  terminates with a tremendous relative error with respect to the qualitative solutions found by our IPA. The solution found by the IPA is then by construction always better than the solution found by the simple penalty algorithm.
One might be tempted to believe that the simple penalty algorithm could also be a viable alternative due to its inherent fast solution time but the next experiment will reveal that the algorithm cannot produce qualitative points in a reliable way.

\paragraph{Second experiment} 

We carry out a detailed comparison between the IPA, the penalty algorithm in Algorithm \ref{algo:some_Algorithm} and \cplexmiqp. In order to do so, we construct a test set of $20$ problem instances per value of $S\in\{3,6,10,15,20\}$.
We then solve this test set with the algorithms under analysis and compare the results with respect to solution time and quality. For the solution time, we report 't\_av' the average solution time in seconds and for the solution quality, we choose the following two criteria.
\begin{itemize}
	\item 'min\_count': for each desired sate, we check which algorithm achieved the smallest objective function value. This algorithm is then awarded a score. Surely, multiple algorithms can be awarded a score in the same run (when multiple algorithms find the same 'best' minimum).
	\item 'rel\_err\_av': for each desired state, we store for each algorithm the relative error between the objective function value achieved by that algorithm and the smallest objective function value in that run (the one that was awarded a 'min\_count'-score). Only runs resulting in a non-zero relative error are taken into account when computing this average relative error.
\end{itemize}
We chose to measure the quality of the algorithms via the described two quantities since, as the centers of the desired states in the test set are randomly distributed over $\Ot$, the global minimum of the optimization problem is not known analytically. Therefore, the 'min\_count'-value simply tells us how often an algorithm performed best compared to the other algorithms. The average relative error is an additional measure of quality.
The results of this experiment can be found in Table \ref{table:comparison_linGAUS_MINLPpaper}.

\begin{table}[ht]
	\centering
	\resizebox{\textwidth}{!}{%
		\begin{tabular}{c |cc c c c |c c ccc |ccccc }
			\toprule
			\multicolumn{1}{c}{} & \multicolumn{5}{|c|}{t\_av (s)} & \multicolumn{5}{|c|}{min\_count} & \multicolumn{5}{|c}{rel\_err\_av ($\%$)} \\
			\midrule
			S & 3 & 6 & 10 & 15 & 20 & 3 & 6 & 10 & 15 & 20 & 3 & 6 & 10 & 15 & 20\\
			\midrule
			Penalty & 88 & 125 & 152 & 184 & 202 & 12 & 5 & 2 & 1 & 1 & 33 & 41 & 52 & 33 & 56\\
			IPA &  918 & 1112 & 1149 & 1343 & 1239 & 20 & 13 & 14 & 16 & 15 & 0 & 6 & 37 & 11 & 12\\
			\cplexmiqp & 1885 & 3486 & TL & TL & TL & 20 & 18 & 9 & 5 & 5 & 0 & 13 & 5035 & 4311 & 7582\\
			\bottomrule
		\end{tabular}
	}
	\caption{Results of the second experiment. Comparison of the penalty algorithm, the IPA and \cplexmiqp for different values of $S$.}
	\label{table:comparison_linGAUS_MINLPpaper} 
\end{table}

\noindent Starting the discussion with the average time, we observe that \cplexmiqp is heavily affected by an increase in $S$ while the IPA is only slightly affected and the simple penalty algorithm is by construction the fastest.
Moving to the solution quality, we see that \cplexmiqp as well as the IPA always find the global minimum for $S=3$ where the simple penalty algorithm only finds the global minimum in $12$ cases with an average relative error of $33\%$ in the remaining $8$ cases. Increasing $S$ (and thus the combinatorial complexity of the problem), we observe that \cplexmiqp (especially for $S \geq 10$) fails to find the global minimum in the given time. The IPA on the other hand then starts to be the most competitive algorithm in the 'min\_count'-sense, i.e., producing the smallest objective function values compared to the other algorithms (do also note the small relative average error of the IPA).
Furthermore, we report that for $S=10,15$, and $20$ there was always one problem instance where \cplexmiqp only returned the zero solution (which is feasible but does not make sense from the application point of view). As a result the average relative error is significantly large.

\REV{To put the results of this experiment into a better perspective, Figure \ref{fig:Boxplots_fvals_poi_MINLPpaper} contains, for each part of the test set, a boxplot related to the objective function of the final solutions attained by each algorithm (i.e. for each value of $S$ the test set contains $20$ instances, such that for each algorithm a boxplot is created for the $20$ objective function values related to the solutions we found). It is important to note that in the top of Figure \ref{fig:Boxplots_fvals_poi_MINLPpaper} the outliers of the data sets have been removed for visual clarity whilst in the bottom part of Figure \ref{fig:Boxplots_fvals_poi_MINLPpaper} the outliers are contained in the data sets. As a result, the previously described phenomenon becomes clearly visible in the bottom of Figure \ref{fig:Boxplots_fvals_poi_MINLPpaper}: for $S=10$, $15$, and $20$ the data for \cplexmiqp includes an outlier with a significantly larger value such that the remaining box plots are tightly squeezed. 
Even if those outliers play a  role in getting the large average relative errors from Table \ref{table:comparison_linGAUS_MINLPpaper}, when taking a look at the  boxplots related to the data without outliers, we can easily see that the IPA  generally has both a smaller median and a smaller variance for larger $S$.
}

\begin{figure}[ht]
	\centering
	\includegraphics[width=\textwidth, height = 0.5\textheight]{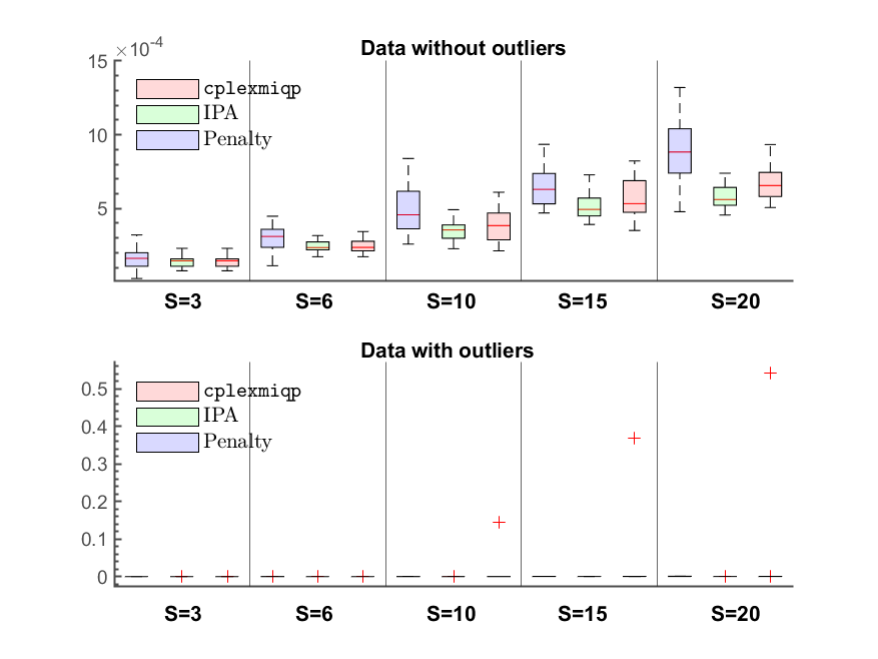}
	\caption{Results of the second experiment: for each part of the test set, a boxplot related to the objective function of the final solutions obtained by each algorithm is depicted. The outliers of the data sets are not included in the top part of the figure.}
	\label{fig:Boxplots_fvals_poi_MINLPpaper}
\end{figure}

\noindent \REV{The results from Figure \ref{fig:Boxplots_fvals_poi_MINLPpaper} further strengthen the observation made in Table~\ref{table:comparison_linGAUS_MINLPpaper}: the objective function values obtained with the points found by the IPA are, on the one hand, clearly better than the ones found by the Penalty algorithm and, on the other hand, either very similar (for $S=3$ and $S=6$) 
or superior  (for larger $S$)
to the ones found by \cplexmiqp.}


 Combining the results of this experiment with the results from the first experiment, we can conclude that the IPA can solve large-scale problems, and can, at the same time, compete with \cplexmiqp in smaller problem instances. The simple penalty approach is very fast but, as we can see in this experiment, fails to produce solutions of high quality in a reliable fashion.

\subsection{Convection-Diffusion model problem}
\label{sec:5_2_MINLPpaper}
We now consider the original optimal control problem, but governed by the convection-diffusion PDE
\begin{align}
\label{eq:PDE_CD_MINLPpaper}
-\Delta y(x) + w(x)\cdot\nabla y(x) = \sum_{i=1}^l u_i \chi_i(x),\quad x\in\Omega,
\end{align}
with the wind vector $w(x) = (2x_2(1-x_1^2), -2x_1(1-x_2^2))\transp$ and piecewise constant source functions $\chi_1,\dots ,\chi_l\in L^2(\Omega)$, that are constant on the subdomains $\Omega_1,\dots \Omega_l\subset\Omega$ forming a uniform decompostion of $\Omega = [0,1]^2$ into $l$ many squares. Here, we use Q1 finite elements, while also employing the Steamline Upwind Petrov-Galerkin (SUPG) \cite{brooks1982streamline} upwinding scheme as implemented in the {\tt IFISS} software package \cite{elman2007algorithm} to discretize \eqref{eq:PDE_CD_MINLPpaper} and build the relevant finite element matrices.

For the resulting discretized optimal control problem, we repeat the second experiment from the previous section, where all settings and parameters are chosen as before. We chose not to include the other experiments to keep the length of this presentation healthy but can report that results similar to the Poisson problem are obtained. The result of the second experiment for this convection-diffusion problem can be seen in Table \ref{table:comparison_linCDPWconst_MINLPpaper}.

\begin{table}[ht]
	\centering
	\resizebox{\textwidth}{!}{%
		\begin{tabular}{c |cc c c c |c c ccc |ccccc }
			\toprule
			\multicolumn{1}{c}{}& \multicolumn{5}{|c|}{t\_av (s)} & \multicolumn{5}{|c|}{min\_count} & \multicolumn{5}{|c}{rel\_err\_av ($\%$)} \\
			\midrule
			S & 3 & 6 & 10 & 15 & 20 & 3 & 6 & 10 & 15 & 20 & 3 & 6 & 10 & 15 & 20\\
			\midrule
			Penalty & 103 & 148 & 198 & 231 & 247 & 16 & 7 & 4 & 0 & 0 & 39 & 27 & 44 & 50 & 38\\
			IPA &  943 & 1008 & 1083 & 1223 & 1337 & 19 & 16 & 15 & 15 & 14 & 16 & 19 & 12 & 16 & 12\\
			cplexmiqp & 1937 & TL & TL & TL & TL & 20 & 16 & 7 & 5 & 6 & 0 & 10 & 24 & 52 & 24\\
			\bottomrule
		\end{tabular}
	}
	\caption{Results for the convection-diffusion problem: comparison of the penalty algorithm, the IPA and \cplexmiqp for different values of $S$.}
	\label{table:comparison_linCDPWconst_MINLPpaper} 
\end{table}


\noindent Investigating Table \ref{table:comparison_linCDPWconst_MINLPpaper}, we observe that \cplexmiqp shows basically the same behaviour as in the Poisson problem: it is always able to solve the problem in the given time for $S=3$, but then requires much more time and starts to produce unsatisfactory solutions for larger values of $S$. The IPA again succeeds in finding either the global minimum or a reasonable solution in around $15-20$ minutes. The simple penalty approach is again very fast, but also quite unreliable in terms of solution quality.

\REV{As in the previous section, Figure \ref{fig:Boxplots_fvals_CD_MINLPpaper} contains, for each part of the test set, a boxplot for the objective function related to the final solutions found by each algorithm. While \cplexmiqp does not produce any clear outliers for this data set, the results still verify that the IPA is the superior algorithm in this comparison.}

\begin{figure}[ht]
	\centering
	\includegraphics[width=\textwidth, height = 0.25\textheight]{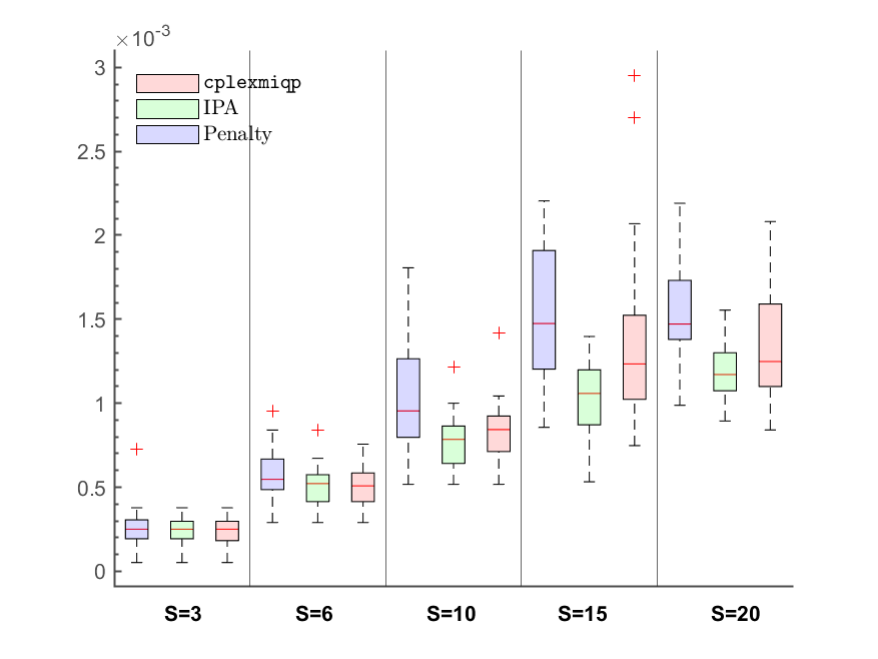}
	\caption{Results of the second experiment: for each part of the test set, a boxplot related to the objective function of the final solutions attained by each algorithm is depicted. For $S=10$, $15$, and $20$ the boxplot for \cplexmiqp is plotted with respect to the right y-axis.}
	\label{fig:Boxplots_fvals_CD_MINLPpaper}
\end{figure}

\noindent \REV{We conclude this numerical comparison with a final experiment that shall, on the one hand, highlight the robustness of the optimization results with respect to the FEM mesh and, on the other hand, show the efficiency of our numerical linear algebra. We create one problem instance with $S=10$ active sources and discretize this instance with a decreasing FEM mesh size of $h=2^{-4},~2^{-5},~2^{-6},~2^{-7}$. The instance is then solved for each mesh size with the different algorithms from the second experiment and additionally with a version of the IPA that does not embed the preconditioned \gmres described in Section \ref{sec:4_2_MINLPpaper}. This is done once for the Poisson model problem and once for the convection-diffusion model problem. Figure \ref{fig:step11_MINLPpaper} shows the final objective function value obtained (top row) and the required computational times (bottom row) for the Poisson problem (left row) and the convection-diffusion problem (right column).}


\begin{figure}[ht]
	\centering
	\includegraphics[width=0.9\textwidth]{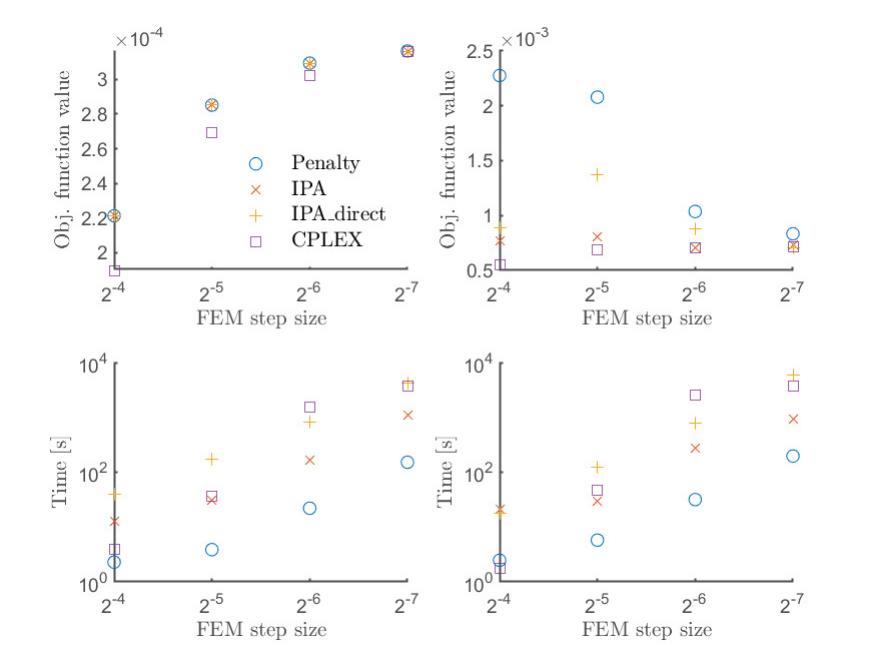}
	\caption{Results of the final experiment: the final objective function values obtained (top row) and the required computational times (bottom row) for the Poisson problem (left row) and the convection-diffusion problem (right column).}
	\label{fig:step11_MINLPpaper}
\end{figure}

\noindent \REV{It can be observed that for both model problems, the final objective function values obtained with the various algorithms increasingly agree with each refinement of the FEM mesh. Furthermore, the bottom row of Figure \ref{fig:step11_MINLPpaper} shows the efficiency of our numerical linear algebra since the version of the IPA using only a direct solver requires significantly more time, especially for finer FEM mesh sizes.}

\subsection{Analysis of the local solver}
\label{sec:5_3_MINLPpaper}
As already mentioned, one of the main benefits of our IPA is the possibility to exploit the problem features through \REV{an efficient implementation of the} 
local solver in line $3$ of Algorithm \ref{algo:Perturbation}. We now briefly report on the numerical behaviour of our implementation of the IPM described in Section \ref{sec:4_2_MINLPpaper}.
Thus, we create an exemplary problem instance (both for the Poisson and convection-diffusion problem) for $S=10$, and vary the step size of the FEM grid as $h\in\{2^{-5},2^{-6}\}$. The instance is then solved for each step size with the IPA, where the settings for the IPM and the IPA are as before.
Figure \ref{fig:gmres} shows the number of nonlinear (outer) iterations (NLI) required by the IPM and the average number of preconditioned GMRES iterations (aGMRES) for each value of $\varepsilon$ visited during the IPA. 
Clearly, multiple values reported for a single value of $\varepsilon$ correspond to active perturbation cycles of Algorithm \ref{algo:Perturbation}. 

\begin{figure}[ht] 
	\centering		
	\begin{tikzpicture}
	\begin{semilogxaxis}[only marks,width=0.5\textwidth, 
	legend pos=south east,
	legend cell align=left,
	legend style={font=\small},
	height = .2 \textheight,
	xmax = 100000
	]
	\foreach \j in {1,2} {
		\addplot+ table[x index = 0, y index = \j] {./poi_medium.dat};
	}
	\legend{aGMRES,NLI};
	\end{semilogxaxis}
	\end{tikzpicture}		
	\begin{tikzpicture}
	\begin{semilogxaxis}[only marks,width=0.5\textwidth, 
	legend pos=south east,
	legend cell align=left,
	legend style={font=\small},
	height = .2 \textheight,
	xmax = 100000
	]
	\foreach \j in {1,2} {
		\addplot+ table[x index = 0, y index = \j] {./cd_medium.dat};
	}
	\legend{aGMRES,NLI};
	\end{semilogxaxis}
	\end{tikzpicture}		
	\begin{tikzpicture}
	\begin{semilogxaxis}[only marks,width=0.5\textwidth, 
	legend pos=south east,
	legend cell align=left,
	legend style={font=\small},
	height = .2 \textheight,
	xmax = 100000
	]
	\foreach \j in {1,2} {
		\addplot+ table[x index = 0, y index = \j] {./poi_fine.dat};
	}
	\legend{aGMRES,NLI};
	\end{semilogxaxis}
	\end{tikzpicture}		
	\begin{tikzpicture}
	\begin{semilogxaxis}[only marks,width=0.5\textwidth, 
	legend pos=south east,
	legend cell align=left,
	legend style={font=\small},
	height = .2 \textheight,
	xmax = 100000
	]
	\foreach \j in {1,2} {
		\addplot+ table[x index = 0, y index = \j] {./cd_fine.dat};
	}
	\legend{aGMRES,NLI};
	\end{semilogxaxis}
	\end{tikzpicture}
	\caption{Number of IPM iterations and average GMRES iterations during the IPA steps for the Poisson (left) and convection-diffusion (right) problems for FEM step-lengths $h=2^{-5}$ (top) and $h=2^{-6}$ (bottom) over the penalty parameter $\varepsilon$.}
	\label{fig:gmres}
\end{figure}

\noindent Firstly, we observe that both values of NLI and aGMRES are higher at the beginning of the IPA process, that is for larger values of $\varepsilon$. On the other hand, when $\varepsilon$ gets smaller and more perturbation cycles are expected, the number of IPM iterations may get lower and, mostly, the average number of GMRES iterations is reduced. This shows that the IPA together with the IPM efficiently drives the solution of problem \eqref{eq:MINLP_penalty} to the mixed-integer solution of the original problem. This behaviour is observed in Figure \ref{fig:gmres} for both problems and the varying mesh sizes.

Secondly, the reported number of average number of GMRES iterations is pretty low and does not depend on the mesh size. This reveals the effectiveness of the proposed preconditioner also in combination with the inexact approach. Remarkably, values of 
aGMRES are extremely low in the last IPA iterations when $\varepsilon$ is small.

\section{Conclusion \& Outlook}
\label{sec:6_MINLPpaper}
A standard MIPDECO problem with a linear PDE constraint and a modelled control was presented and discretized. A novel improved penalty algorithm (IPA) was developed, that combines well-known exact penalty approaches with a basin hopping strategy and an updating tool for the penalty parameter. As a result, only a local optimization solver is required and an interior point method (IPM) that is suited for the problem in question was presented. The linear algebra phase of the IPM was handled by a Krylov space method together with an efficient preconditioner. Via this, the IPA was shown to work very well in numerical applications for a Poisson as well as a convection-diffusion problem when compared to a simple penalty approach and \cplexmiqp, the \BnB routine of CPLEX. 
As an outlook, the authors want to mention that the IPA has already been successfully applied to the presented optimal control problem, but governed by the nonlinear PDE
\begin{align*}
-\Delta y(x) + y(x)^2 = \sum_{i=1}^l u_i \phi_i(x),\quad x\in\Omega,
\end{align*}
where again the Gaussian source functions defined in \eqref{eq:gaussian_sources_MINLPpaper} are used and the IPM has been adapted as described in Remark \ref{rem:nonlinIPM_MINLPpaper}. As first results, Figure \ref{fig:figure1_nonlin_MINLPpaper} shows the desired state of a random problem instance for $S=10$, as well as the optimal state found by the IPA and a difference plot. Furthermore, Figure \ref{fig:figure2_nonlin_MINLPpaper} shows the result of the experiment from the previous Section \ref{sec:5_3_MINLPpaper} conducted for this problem instance.

\begin{figure}[ht]
	\centering
	\includegraphics[width=0.3\textwidth]{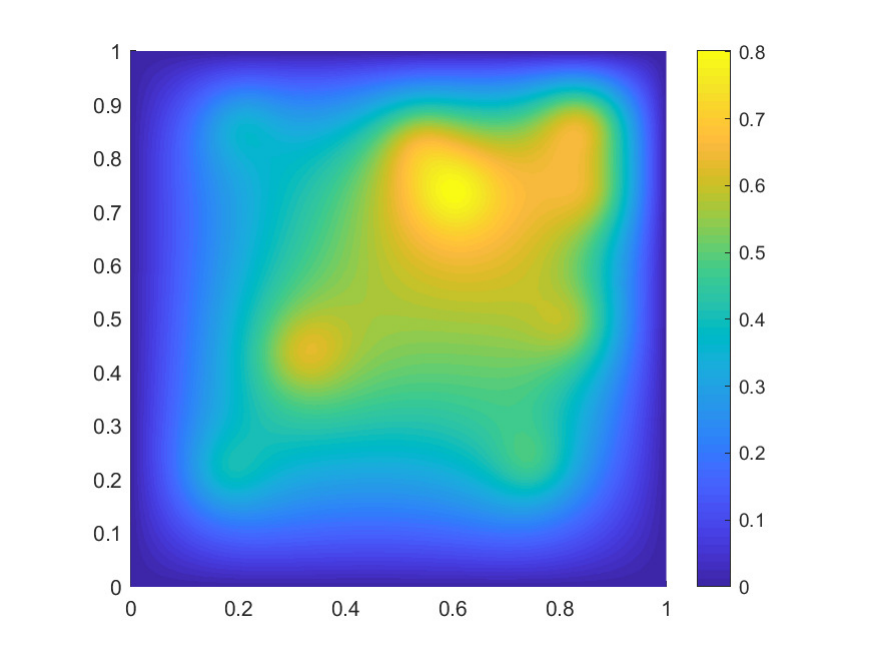}\hfill
	\includegraphics[width=0.3\textwidth]{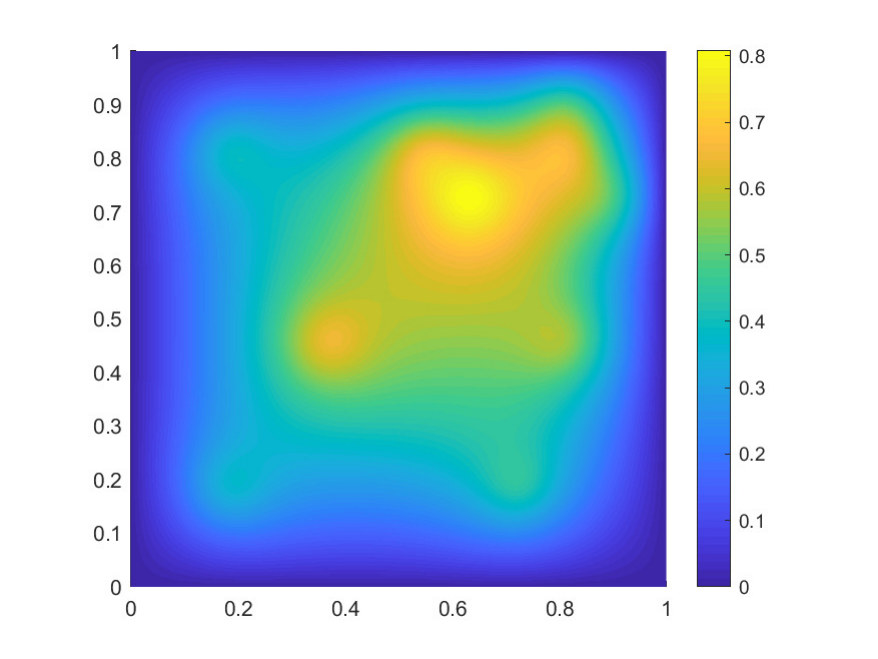}\hfill
	\includegraphics[width=0.3\textwidth]{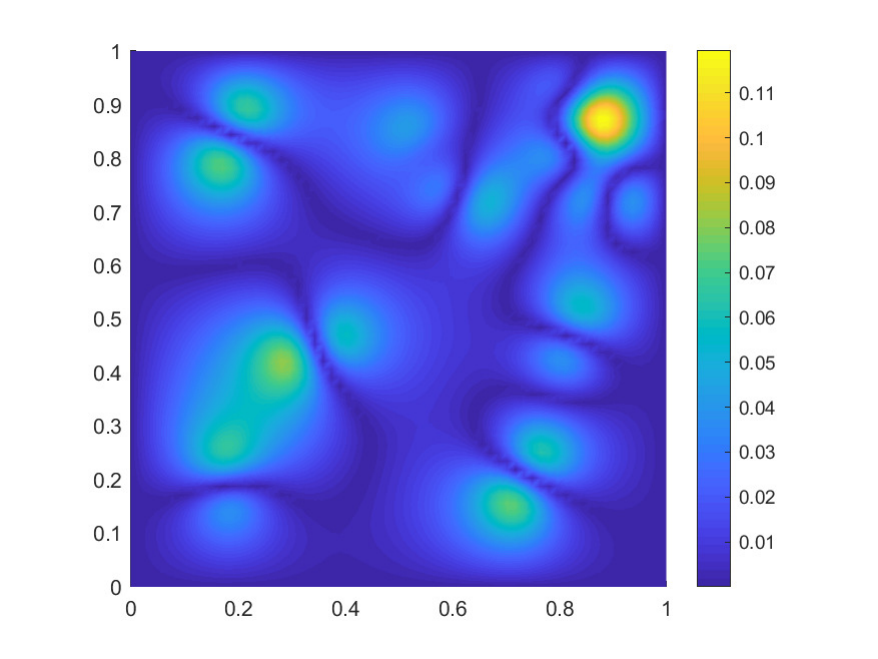}
	\caption{Desired state (left), optimal state found by the IPA (middle), and difference plot (right) for a problem instance for $S=10$ of the nonlinear problem.}
	\label{fig:figure1_nonlin_MINLPpaper}
\end{figure}

\begin{figure} \centering
	\begin{tikzpicture}
	\begin{semilogxaxis}[only marks,width=0.5\textwidth, 
	legend pos=south east,
	legend cell align=left,
	legend style={font=\small},
	height = .2 \textheight,
	xmax = 100000
	]
	\foreach \j in {1,2} {
		\addplot+ table[x index = 0, y index = \j] {./Data_nonlin_IPM_h_2p5.dat};
	}
	\legend{aGMRES,NLI};
	\end{semilogxaxis}
	\end{tikzpicture}	
	\begin{tikzpicture}
	\begin{semilogxaxis}[only marks,width=0.5\textwidth, 
	legend pos=south east,
	legend cell align=left,
	legend style={font=\small},
	height = .2 \textheight,
	xmax = 100000
	]
	\foreach \j in {1,2} {
		\addplot+ table[x index = 0, y index = \j] {./Data_nonlin_IPM_h_2p6.dat};
	}
	\legend{aGMRES,NLI};
	\end{semilogxaxis}
	\end{tikzpicture}
	\caption{Number of IPM iterations and average GMRES iterations during the IPA steps for the nonlinear problem instance for FEM step-lengths $h=2^{-5}$ (left) and $h=2^{-6}$ (right) over the penalty parameter $\varepsilon$.}
	\label{fig:figure2_nonlin_MINLPpaper}
\end{figure}
\noindent Overall, these results are already very encouraging and in future work, a comparison of the IPA with state of the art solvers for such nonlinear problems should be carried out (do note that CPLEX cannot deal with nonlinear PDE constraints). 
Furthermore, future work shall contain the application to MIPDECO problems that are governed by time-dependent PDEs  \REV{(as these result in a truly large-scale context) as well as the extension from binary to general integer constraints and the development of strategies to efficiently deal with these.} 

\section*{Acknowledgement}
D. Garmatter and M. Stoll acknowledge the financial support by the Federal Ministry of Education and Research of Germany (support code 05M18OCB). D. Garmatter, M. Porcelli, and M. Stoll were partially supported by the DAAD-MIUR Joint Mobility Program 2018-2020 (Grant 57396654). The work of M. Porcelli
was also partially supported by the National Group of Computing Science (GNCS-INDAM).


 \printbibliography
\end{document}